\newtheorem{theorem}{Theorem}      
\newtheorem{lemma}{Lemma}
\newtheorem*{main theorem}{Main Theorem}   
\newtheorem*{thmA}{Theorem A}  
\newtheorem*{thmB}{Theorem B}    
\newtheorem*{thmC}{Theorem C}   
\newtheorem*{thmD}{Theorem D}
\theoremstyle{remark}   
\theoremstyle{definition}  
\newtheorem{definition}{Definition}  
\def\N{\mathbb{N}}     
\def\R{\mathbb{R}}     
\def\Z{\mathbb{Z}}      
\def\C{\mathbb{C}}  
\def\P{\mathcal{P}}
\def\norm#1{\|#1\|}
\def\implies{\Longrightarrow}
\begin{document}
\title{Polynomials and Primes in Generalized Arithmetic Progressions (Revised Version)}
\author{Ernie Croot\quad\quad\quad Neil Lyall \quad\quad\quad Alex Rice }
 
\address{School of Mathematics, Georgia Institute of Technology, Atlanta GA 30332, USA}
\email{ecroot@math.gatech.edu}
\address{Department of Mathematics, The University of Georgia, Athens, GA 30602, USA}
\email{lyall@math.uga.edu}
\address{Department of Mathematics, University of Rochester, Rochester, NY 14612, USA}
\email{alex.rice@rochester.edu} 
\subjclass[2000]{11B30}
\begin{abstract} We provide upper bounds on the density of a  symmetric generalized arithmetic progression lacking nonzero elements of the form $h(n)$ with $n\in \N$ or  $h(p)$ with $p$ prime for appropriate $h\in \Z[x]$. The prime variant can be interpreted as a multi-dimensional, polynomial extension of Linnik's Theorem. This version is a revision of the published version. Most notably, the properness hypotheses have been removed from Theorems \ref{main1} and \ref{main2}, and the numerology in Theorem \ref{main1} has been improved. 
\end{abstract}
\maketitle

\setlength{\parskip}{7pt} 
\section{Introduction}
Given $N\in \N$, it is rather trivial to determine, up to a multiplicative constant, how large an arithmetic progression of the form $A=\{xd : 1\leq x \leq L\} \subseteq [1,N]:=\{1,\dots,N\}$ can be before it is guaranteed to contain a nonzero perfect square. Specifically, $d^2\in A$ if $L\geq d$, so if $A$ contains no squares, then $L^2< Ld \leq N$, and therefore $L< \sqrt{N}$. To observe the sharpness of this bound, fix any prime $p\in [\sqrt{N}/2,\sqrt{N}]$ and let $A=\{xp: 1\leq x \leq p-1\}$. However, with an additional degree of freedom, say if $$A=\{x_1d_1+x_2d_2 : \ |x_1|\leq L_1, \ |x_2|\leq L_2 \} \subseteq [-N,N],$$ the analogous question becomes far less trivial. We begin the exploration of such questions with a standard definition.

\begin{definition} A \textit{generalized arithmetic progression (GAP)} of dimension $k$ (in $\Z$) is a set of the form $$A=\{a+x_1d_1+\cdots+x_kd_k \ : \ 1\leq x_i \leq L_i\},$$ where $a$, $x_i \in \Z$ and $d_i$, $L_i \in \N$. $A$ is called \textit{symmetric} if it takes the form
$$A=\{x_1d_1+\cdots+x_kd_k \ : \ |x_i|\leq L_i\}.$$ 
\end{definition} 

In Section \ref{modelS}, before delving into our more general main results, we give a pleasingly brief, essentially self-contained proof of the following upper bound in the two-dimensional case, with an additional simplifying assumption to suppress any obscuring technical details.  

\begin{theorem} \label{modelI} Suppose $A=\{x_1d_1+x_2d_2 : |x_1|\leq L_1, \ |x_2|\leq L_2 \}\subseteq [-N,N]$ is proper, which is to say $$|A|=(2L_1+1)(2L_2+1).$$ If $L_2d_2\geq L_1d_1$, $d_2$ is prime, and $A$ contains no nonzero squares, then $$|A|\ll N^{5/6}(\log N)^{1/3}.$$
\end{theorem}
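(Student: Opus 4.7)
The plan is to combine one-dimensional observations on rows of $A$ with a character-sum estimate modulo the prime $d_2$, and balance these against the trivial box inequality $L_2d_2\le N$. First I would reduce to the case $\gcd(d_1,d_2)=1$; since $d_2$ is prime, the alternative $d_2\mid d_1$ is handled by a short direct argument exploiting properness. Applying the one-dimensional observation from the paragraph preceding Theorem~\ref{modelI} to the rows $x_2=0$ and $x_1=0$ gives $L_1<d_1$ and $L_2<d_2$; combined with $L_id_i\le L_2d_2\le N$ this yields $L_1,L_2<\sqrt{N}$. In particular, if $d_2>N^{2/3}$ then $L_2<N/d_2<N^{1/3}$ and the desired estimate $|A|\ll \sqrt{N}\cdot N^{1/3}=N^{5/6}$ is immediate, so I will assume $d_2\le N^{2/3}$ from here on.

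The main step is to set $T=\lfloor\sqrt{L_2d_2-L_1d_1}\rfloor$ and count pairs $(n,x_1)\in[1,T]\times[-L_1,L_1]$ with $n^2\equiv x_1d_1\pmod{d_2}$. The choice of $T$ guarantees that the associated integer $x_2:=(n^2-x_1d_1)/d_2$ lies in $[-L_2,L_2]$, so that each such pair exhibits the nonzero square $n^2$ as an element of $A$; by properness and the hypothesis, the count must be zero. On the other hand, grouping by $x_1$ and noting that for each $x_1$ with $x_1d_1$ a nonzero quadratic residue modulo $d_2$ the congruence has exactly two solutions per period, the count equals
\[
\frac{2T}{d_2}\cdot\#\{x_1\in[-L_1,L_1]\setminus\{0\}:\chi(x_1d_1)=1\}+O(L_1),
\]
where $\chi$ is the Legendre symbol mod $d_2$. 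P\'olya--Vinogradov for $\chi$ evaluates this cardinality as $L_1+O(\sqrt{d_2}\log d_2)$, and forcing the expression to vanish produces the key estimate $L_1\sqrt{L_2}\ll d_2\log d_2$ (after using $T\asymp\sqrt{L_2d_2}$ and identifying the binding error term).

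Finally, I cube and combine with the a priori bounds $L_1\le\sqrt{N}$ and $L_2\le N/d_2$ to get
\[
(L_1L_2)^3=(L_1\sqrt{L_2})^2\cdot L_1L_2^2 \ll (d_2\log d_2)^2\cdot\sqrt{N}\cdot\frac{N^2}{d_2^2}=N^{5/2}(\log N)^2,
\]
which already yields $|A|\ll N^{5/6}(\log N)^{2/3}$. The main obstacle is then sharpening the exponent of the logarithm from $2/3$ to the stated $1/3$: the factor $(\log N)^2$ above arises from squaring the single-log Fourier estimate $L_1\sqrt{L_2}\ll d_2\log d_2$, and extracting only $(\log N)^1$ inside the cube requires either a sharper character-sum bound yielding $L_1^2L_2\ll d_2^2\log d_2$ directly, or a more refined case analysis exploiting that the $\log d_2$ from P\'olya--Vinogradov is only active when $L_1\gtrsim\sqrt{d_2}\log d_2$, in which regime an improved bound on $L_2$ (e.g.\ via the complete quadratic Gauss sum on $[1,d_2]$) absorbs the extra logarithm.
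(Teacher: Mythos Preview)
Your overall strategy is reasonable, but the main counting step contains a gap that makes the argument collapse. You write the number of pairs $(n,x_1)$ as
\[
\frac{2T}{d_2}\cdot\#\{x_1\in[-L_1,L_1]\setminus\{0\}:\chi(x_1d_1)=1\}+O(L_1),
\]
with the $O(L_1)$ coming from the $O(1)$ boundary error, per $x_1$, in counting square roots of $x_1d_1$ on the incomplete interval $[1,T]$. But since $T\asymp\sqrt{L_2d_2}$ and you have already established $L_2<d_2$ from the row $x_1=0$, we always have $T<d_2$, so there is never a completed period and the main term $2TL_1/d_2<2L_1$ is \emph{smaller} than the stated error $O(L_1)$. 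Setting the expression to zero therefore yields no information at all, and the claimed estimate $L_1\sqrt{L_2}\ll d_2\log d_2$ does not follow. The P\'olya--Vinogradov error is never the binding one here; the boundary error is, and it is fatal.

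The paper avoids this by replacing the Dirichlet-type kernel implicit in your count with a Fej\'er-type kernel: set $A_1=\{xd_1:|x|\le L_1/4\}\subseteq\Z_{d_2}$ and $f=|A_1|^{-1}1_{A_1}*1_{A_1}$, so that $\widehat{f}\ge 0$ everywhere. The vanishing of $\sum_{m\le n} f(m^2)$ then forces
\[
\sum_{t\neq 0}\widehat{f}(t)|W(t)|\ \ge\ \widehat{f}(0)W(0)\ \gg\ L_1\sqrt{L_2d_2},
\]
while positivity of $\widehat{f}$ lets one bound the left side by $\max_{t\neq 0}|W(t)|\cdot\sum_t\widehat{f}(t)=\max_{t\neq 0}|W(t)|\cdot d_2f(0)=d_2\max_{t\neq 0}|W(t)|$. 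The quadratic Gauss/Weyl bound $|W(t)|\ll\sqrt{d_2\log d_2}$ (this is where $d_2\in\P$ and $n\le d_2$ are used) then gives $L_1\sqrt{L_2}\ll d_2\sqrt{\log N}$ directly---note the square-root logarithm. This also resolves your secondary obstacle: balancing $|A|\ll (N/\sqrt{L_2})\sqrt{\log N}$ against $|A|\ll \sqrt{N}\,L_2$ at $L_2=(N\log N)^{1/3}$ produces exactly $N^{5/6}(\log N)^{1/3}$, with no further case analysis needed.
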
 

\noindent \textit{Notational Remark.} We shall frequently employ the $\ll$ symbol to mean ``less than a constant times", utilizing subscripts to indicate what the implied constant depends on. In the absence of a subscript, the implied constant is absolute.

Comparing Theorem \ref{modelI} to the trivial and sharp bound of $\sqrt{N}$ in the one-dimensional case, it is natural to ask if one can construct a family of examples of two-dimensional GAPs $A\subset [-N,N]$ with no nonzero squares and $|A|$ significantly greater than $\sqrt{N}$. This sort of lower bound, in this and more general contexts, has thus far proven elusive, and any such examples would be rather interesting. 

\subsection{Main Results}

\subsubsection{Intersective polynomials}

One can generalize the inquiries mentioned thus far in the following way: given a polynomial $h\in \Z[x]$, how large can a symmetric GAP of dimension $k$ be before it is guaranteed to contain a nonzero element in the image of $h$? The following definition encapsulates the largest possible class of polynomials for which a meaningful result of this type is possible. 

\begin{definition} A polynomial $h\in \Z[x]$ is \textit{intersective} if for every $q\in \N$, there exists $r\in \Z$ such that $q \mid h(r)$.
\end{definition}
Intersective polynomials include all polynomials with an integer root, but also include certain polynomials without rational roots such as $(x^3-19)(x^2+x+1)$. In this context it is clear that the intersective condition is necessary, as if $h \in \Z[x]$ has no root modulo $q$, a symmetric GAP $A=\{x_1d_1+\cdots+x_kd_k \ : \ |x_i|\leq L_i\}$ completely misses the image of $h$ whenever $q \mid d_i$ for all $1\leq i\leq k$.  Our first main result is the following.

\begin{theorem} \label{main1} Suppose $h\in \Z[x]$ is an intersective polynomial of degree $\ell$ and $A\subseteq [-N,N]$ is a symmetric GAP of dimension $k$. If $A\cap h(\Z) \subseteq \{0\}$, then $$|A|\ll_h N^{1-\big(\ell(2^{\ell+2}+1)^{k-1}\big)^{-1}}(6k)^{2k}\log N.$$ 
\end{theorem}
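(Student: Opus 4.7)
The plan is to adapt the Sárközy--Lucier density-increment strategy to symmetric GAPs of fixed dimension $k$. Writing $\delta=|A|/(2N+1)$, I would assume $\delta$ violates the claimed bound and derive a contradiction by producing, in each iteration, a symmetric sub-GAP of the same dimension $k$, whose containing box has size only polynomially smaller than $N$, on which the relative density of $A$ exceeds $\delta(1+c)$ for an absolute constant $c>0$. Since density cannot exceed $1$, this iteration must terminate after $O(\log(1/\delta))$ steps, forcing the stated upper bound on $|A|$.

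The first reduction is to exploit the intersective hypothesis on $h$: a standard $p$-adic (Hensel) construction produces, for every modulus $q$ supported on the finite set of ``bad'' primes of $h$, an integer $r_q$ with $h(r_q)\equiv 0\pmod{q^\ell}$ in a coherent way. Replacing $h(x)$ by an auxiliary polynomial $\tilde h\in\Z[x]$ of the shape $h(qx+r_q)/q^{\ell}$ yields an intersective reduction whose image avoids all local obstructions, at the cost of harmless $h$-dependent constants. This lets me henceforth treat $\tilde h$ as if it had an integer root at $0$ and were locally unobstructed.

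With that reduction in hand, I would introduce the counting form
\[ \Lambda(A) := \sum_{n\leq M} 1_A(\tilde h(n)), \]
where $M$ is chosen so that $\tilde h$ maps $[1,M]$ into an interval of size comparable to $N$. Fourier inversion expands $\Lambda(A)$ as the expected main term $M\delta$ plus an oscillatory error built from the polynomial exponential sums $\sum_n e(\theta \tilde h(n))$ weighted against $\widehat{1_A}(\theta)$. Since the hypothesis $A\cap h(\Z)\subseteq\{0\}$ forces $\Lambda(A)$ to be negligible, the error must absorb the main term, and an $L^2$/pigeonhole argument produces a nonzero $\theta$ at which $|\widehat{1_A}(\theta)|$ is large. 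The polynomial sum itself is handled by Weyl differencing $\ell-1$ times, which pins $\ell!\theta$ into a short arc modulo $1$.

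The main obstacle, and the source of the combinatorial factor $(2^{\ell+2}+1)^{k-1}$, is converting this single-frequency concentration into a genuine density increment on a same-dimension symmetric sub-GAP, now \emph{without} a properness hypothesis on $A$. I would apply simultaneous diophantine approximation of $\theta$ against the $k$ generators $d_1,\dots,d_k$, with denominator tolerance essentially $2^{\ell+2}$ dictated by the $\ell-1$ rounds of Weyl differencing; tiling $A$ by parallel $k$-dimensional affine sub-GAPs and pigeonholing then produces the density increment, the cost $(2^{\ell+2}+1)^{k-1}$ being the product of approximation budgets across the $k$ generators (the first direction, along which the large Fourier coefficient naturally aligns, being free). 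Routing around the absence of properness requires extra care in certifying that the output sub-GAP is genuinely of dimension $k$ and in bounding the $O(L_1\cdots L_k)$ relevant frequencies; the bookkeeping from this step and from the $O(\log(1/\delta))$ iterations is what ultimately contributes the $(6k)^{2k}\log N$ factor to the final bound.
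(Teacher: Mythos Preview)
Your proposal misidentifies the problem with the S\'ark\"ozy/Lucier setting and tries to run a density-increment iteration that does not make sense here. In Theorems~A and~B one has an \emph{arbitrary} dense set $A\subseteq[1,N]$ and studies $A-A$; a large Fourier coefficient of $1_A$ then genuinely yields a subprogression on which $A$ is denser. In Theorem~\ref{main1}, however, $A$ \emph{is} the symmetric GAP: it is not a dense-but-unstructured subset whose density can be incremented. On any symmetric sub-GAP $A'\subseteq A$ the ``relative density of $A$'' is identically~$1$, so the sentence ``produce a sub-GAP on which the relative density of $A$ exceeds $\delta(1+c)$'' has no content, and the iteration you describe cannot get started. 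Relatedly, the Fourier transform of $1_A$ for a GAP is an explicit product of Dirichlet kernels, so there is nothing to pigeonhole: one already knows exactly where $\widehat{1_A}$ is large.

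The paper's argument is not a density increment but a \emph{dimension reduction}. One first proves (Lemma~\ref{itn}) by a direct Fourier computation on $\Z_{d_k}$---convolving Fej\'er-type weights $f_i$ built from the directions $d_1,\dots,d_{k-1}$ against the Weyl sum for $h_q$---that if every side length $L_i$ exceeds roughly $(\delta^{-1}\log N)^{2^{\ell+2}}$, then $A$ must contain a nonzero $h(n)$. If $A\cap h(\Z)\subseteq\{0\}$, the smallest $L_i$ is therefore bounded; one deletes that direction, obtaining a $(k-1)$-dimensional GAP with density $\delta_1\geq\delta_0/L_1\gtrsim(\delta_0^{-1}\log N)^{-(2^{\ell+2}+1)}$, and iterates down to dimension~$1$ where the trivial bound $L_k\ll_h N^{1-1/\ell}$ applies. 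This is the source of the exponent $\bigl(\ell(2^{\ell+2}+1)^{k-1}\bigr)^{-1}$: each dimension drop costs a factor $2^{\ell+2}+1$ in the exponent, not a ``simultaneous approximation budget across $k$ generators'' as you suggest. The factor $(6k)^{2k}$ comes separately, from reducing an arbitrary symmetric GAP to an $M$-proper one via the elementary observation that $M|A|\leq\prod_i(6L_i+1)$; it is not iteration bookkeeping.
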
  

\noindent The bound in Theorem \ref{main1} is meaningful for a somewhat small range of dimensions (up to about $k=\log\log N$), but it gives the expected power saving in low dimensions. We include the intersective hypothesis in Theorem \ref{main1} because those are the only polynomials that are compatible with \textit{all} GAPs, but given a fixed GAP, the polynomial only needs to have roots at certain moduli, as indicated by the result's more precise formulation in Section 3.

\subsubsection{$\P$-intersective polynomials}

If we further modify this discussion by restricting the inputs for our polynomials to the primes, which we denote by $\P$, we need to further restrict the class of admissible polynomials, leading to the following definition. 

\begin{definition} A polynomial $h\in \Z[x]$ is \textit{$\P$-intersective} if for every $q\in \N$, there exists $r\in \Z$ such that $q \mid h(r)$ and $(r,q)=1$.
\end{definition} 

$\P$-intersective polynomials include all polynomials with a root at $1$ or $-1$, but again include polynomials without rational roots, in fact the previous example $(x^3-19)(x^2+x+1)$ still qualifies. The necessity of this condition in this context is again clear, and our second main result is the following.

\begin{theorem} \label{main2} Suppose $h\in \Z[x]$ is a $\P$-intersective polynomial of degree $\ell$ and $A\subseteq [-N,N]$ is a symmetric GAP of dimension $k\leq c(\log\log N)/\ell$ for a sufficiently small absolute constant $c>0$. If $A\cap h(\P)\subseteq \{0\}$, then $$|A|\ll_h N^{1-c^k5^{-\ell k}}.$$ 
\end{theorem}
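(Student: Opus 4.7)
\textbf{Proof plan for Theorem \ref{main2}.} The plan is a Fourier-analytic density-increment argument on the symmetric GAP $A$, directly parallel to the scheme used for Theorem \ref{main1}, but with exponential sums over the integers replaced by their prime-restricted analogues. Set $\delta = |A|/N$, choose $X \asymp_h N^{1/\ell}$ so that $\{h(p) : p \le X\} \subseteq [-N,N]$, and introduce the prime-weighted counting functional
\[
T \;=\; \sum_{p \le X} (\log p)\, 1_A\bigl(h(p)\bigr).
\]
Since $A \cap h(\mathcal{P}) \subseteq \{0\}$ and $h$ has at most $\ell$ integer roots, the hypothesis forces $T \ll_h \log N$. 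The engine of the proof is the comparison of $T$ with its naive expectation $\asymp \delta X$, implemented by a Hardy--Littlewood decomposition of the exponential sum $S(\alpha) = \sum_{p \le X}(\log p)\, e(h(p)\alpha)$.

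I would split $[0,1]$ into major arcs $\mathfrak{M}$ around rationals $a/q$ with $q \le Q$ (for a slowly growing $Q$) and minor arcs $\mathfrak{m}$. On the major arcs, standard analysis of $S(\alpha)$ combined with the $\mathcal{P}$-intersective hypothesis produces a positive singular series $\mathfrak{S}(h) \gg_h 1$ and a main term of order $\delta X \mathfrak{S}(h)$. On the minor arcs, the essential external input is a Vinogradov/Hua-type estimate for polynomial exponential sums over primes of the form
\[
\sup_{\alpha \in \mathfrak{m}} |S(\alpha)| \;\ll\; X^{1-\eta}(\log X)^{O(1)}, \qquad \eta \gg 5^{-\ell},
\]
which is what ultimately produces the factor $5^{-\ell k}$ in the final exponent after iteration. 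Pairing $S$ with $\widehat{1_A}$ via Parseval then bounds the minor-arc contribution to $T$ by this supremum times $\|\widehat{1_A}\|_1$, itself controlled by Cauchy--Schwarz through $\sqrt{N|A|}$.

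If the minor-arc and off-principal major-arc contributions cannot absorb the main term, then some non-principal frequency $\alpha_0$ supports a large Fourier coefficient $|\widehat{1_A}(\alpha_0)| \gg \delta |A|$. The ensuing Bohr-set/sub-GAP extraction, of exactly the kind underlying Theorem \ref{main1} (and which must be carried out without assuming properness, since that hypothesis has been dropped here), converts $\alpha_0$ into a sub-GAP $A' \subseteq A$ of controlled dimension on which the relative density of $A$ is at least $\delta(1+c)$ for an absolute $c>0$. Iterating $O(\log(1/\delta))$ times forces the density past $1$ unless $\delta \ll N^{-c^k 5^{-\ell k}}$, which is the desired bound.

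The main obstacle I anticipate is the synchronization of two competing degradations across the iteration: the shrinkage of the ambient GAP at each step versus the dimension inflation incurred by the Bohr-to-GAP conversion, all while absorbing the $\log N$ losses inherent to sums over primes and correctly handling nonproper sub-GAPs produced by the extraction. The condition $k \le c(\log\log N)/\ell$ is precisely what ensures these tensions can be jointly controlled before the iteration collapses; making this bookkeeping tight — and verifying that each ingredient (singular series positivity, minor-arc exponent, nonproper sub-GAP refinement) survives into the next iteration — is the technical heart of the argument.
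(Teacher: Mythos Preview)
Your proposal has a structural misconception at its core. You frame the argument as a density increment: find a large Fourier coefficient of $1_A$, pass to a sub-GAP $A'\subseteq A$ on which ``the relative density of $A$ is at least $\delta(1+c)$'', and iterate. But here $A$ \emph{is} the GAP, not a dense subset of one. If $A'\subseteq A$ is any sub-GAP, then $A\cap A'=A'$ and the relative density is identically $1$; there is nothing to increment. The S\'ark\"ozy-style density-increment machinery you invoke is designed for an arbitrary dense $A\subseteq[1,N]$ and extracts arithmetic structure from a large Fourier coefficient of an otherwise unstructured set. In the present problem the structure is given to you explicitly from the outset in the form of the data $(d_1,\dots,d_k;L_1,\dots,L_k)$, and the task is to exploit it, not to discover it.

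The paper's argument is accordingly of a quite different shape. One works not on the circle but on the finite group $\Z_{d_k}$ (after dividing out by $q=\gcd(D_1,\dots,D_k)$ and replacing $h$ by $h_q$), building Fej\'er-type weights $f_i$ supported on the projections $\{xd_i:|x|\le L_i/4k\}$ and pairing their product against the prime exponential sum $V(t)=\sum_{m\in\Lambda}\log(qm+r)\,e^{2\pi i h_q(m)t/d_k}$. The positivity of $\widehat{f_i}$ reduces matters to an $L^\infty$-type bound on $V$ for $t$ with $(t,d_k)$ small (via a Weyl-over-primes estimate, Lemma~\ref{min}) together with a high-moment bound (Lemma~\ref{moment}) and a quantitative Linnik theorem (Lemma~\ref{qlin}) to control the main term. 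The conclusion (Lemma~\ref{itn2}) is that $A$ meets $h(\P)$ nontrivially as soon as $\min_i L_i$ exceeds a quantity of size roughly $(M(4k)^k\delta^{-1}\log N)^{200\ell^24^{\ell}-1}$. The iteration is then a \emph{dimension decrement}, not a density increment: if the wideness hypothesis fails, one simply drops the shortest direction and applies the lemma to the resulting $(k-1)$-dimensional GAP, continuing until $k=1$, where Linnik's theorem finishes the job. The factor $5^{-\ell k}$ (coming from $(200\ell^24^{\ell})^{1-k}$) records the compounding of the Weyl-type exponent across these $k-1$ dimension drops, not across $O(\log(1/\delta))$ density steps.
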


\noindent \textit{Remark relating Theorem \ref{main2} to Linnik's theorem.} As with Theorem \ref{main1}, we give a sharper version of this result in Section 4 in which we fix a GAP and only insist that the polynomial has coprime roots at certain moduli, a result that can be interpreted as a multi-dimensional, polynomial extension of Linnik's theorem. 
Specifically, after some rearrangement of the traditional statement, Linnik's theorem says that if $|a|\leq d$ with $(a,d)=1$ and $p-a \notin \{xd : |x|\leq L\}$ for all $p \in \P$, then $L \ll (Ld)^{1-\epsilon}$ for some $\epsilon>0$ (it is proven that $\epsilon \geq 1/5$ \cite{xyl} and conjectured that $\epsilon=1/2$). Our refinement of Theorem \ref{main2}, the proof of which prominently utilizes a quantitative version of Linnik's theorem, says that if $h\in \Z[x]$ with $\deg(h)=\ell$ has a coprime root modulo $d_i$ for $1\leq i \leq k$, $A=\{x_1d_1+\cdots+x_kd_k : |x_i|\leq L_i\}$ and $A\cap h(\P)\subseteq \{0\}$, then $$\prod_{i=1}^k L_i \ll_h \Big(\sum_{i=1}^k L_id_i\Big)^{1-\epsilon}$$ for some $\epsilon=\epsilon(k,\ell)>0$. In the traditional case of $h(p)=p-a$,  close inspection of the argument reveals that the implied constant can be taken independent of $a$, as required for a true generalization of Linnik's theorem, provided for example that $|a|\leq \min\{d_i\}_{i=1}^k$.




\subsection{Motivation from Difference Sets} In a series of papers in the late 1970s, S\'ark\"ozy (\cite{Sark1}, \cite{Sark3}) showed that a set of natural numbers of positive upper density necessarily contains two distinct elements which differ by a perfect square, as well as two elements which differ by one less than a prime number, verifying conjectures of Lov\'asz and Erd\H{o}s, respectively. An extensive literature has been developed on extensions and quantitative improvements of these results, for which the reader may refer to \cite{PSS}, \cite{BPPS}, \cite{Slip}, \cite{Lucier}, \cite{LM2}, \cite{Le},  \cite{HLR}, \cite{Lyall}, \cite{Lucier2}, \cite{Ruz}, \cite{lipan} and \cite{Rice}. For a survey of many of these developments, the reader may refer to Chapter 1 of \cite{thesis}. The following theorems summarize the best known bounds analogous to Theorems \ref{main1} and \ref{main2} for the difference set $A-A=\{a-a' \ : \ a,a'\in A\}\subseteq \Z$. 

\begin{thmA}[\cite{BPPS}, \cite{HLR}, \cite{Lucier}, \cite{thesis}] Suppose $h\in \Z[x]$ is an intersective polynomial of degree $\ell\geq 2$. If $A\subseteq [1,N]$ and $(A-A)\cap h(\N) \subseteq \{0\}$, then 
\begin{equation*}\frac{|A|}{N} \ll_{h,\mu} \begin{cases} (\log N)^{-\mu\log\log\log\log N} & \text{if } \ell=2 \text{ or } h(x)=x^{\ell} \\ \Big( \frac{\log \log N}{\log N} \Big)^{1/(\ell-1)} & \text{else} \end{cases}
\end{equation*}
for any $\mu < 1/\log 3$.
\end{thmA}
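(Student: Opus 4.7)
My plan is to apply the Fourier-analytic density increment strategy initiated by S\'ark\"ozy and refined to varying degrees of sharpness in the cited works. Write $\delta = |A|/N$. The first step is to choose a sampling range $M$ with $h(M)\leq cN$ for a small absolute constant $c$, and introduce the counting operator
$$R(A) \;=\; \sum_{n=1}^{M}\sum_{x\in \Z} 1_A(x)\,1_A(x+h(n)),$$
whose random expectation is of order $\delta^2 MN$. The hypothesis $(A-A)\cap h(\N)\subseteq \{0\}$ forces $R(A)$ to reduce to contributions from $n$ with $h(n)=0$, an amount of size $O(\delta N)$; provided $M$ is chosen so that $\delta M \gg 1$, this discrepancy is the fuel for the whole argument.

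Next I would translate the deficit into Fourier language on $\Z/N'\Z$ (with $N'\asymp N$) via Plancherel, recasting it as
$$\sum_{\xi\neq 0}|\widehat{1_A}(\xi)|^2\,S(\xi),\qquad S(\xi) = \sum_{n\leq M}e(h(n)\xi/N').$$
The sum $S$ is analyzed by the Hardy--Littlewood circle method: Weyl's inequality delivers $|S(\xi)|\ll M^{1-c/2^{\ell}}$ off a set of major arcs, while on major arcs near $a/q$ one factors $S$ into a local Gauss-type sum times a smooth integral. The intersective hypothesis enters crucially here: for each $q$ one translates $n$ by a root of $h$ modulo $q$, ensuring the local factor is non-degenerate and allowing the entire estimate to descend to an arithmetic progression of common difference $q$.

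Pigeonholing over the major arc contribution produces a sub-progression $P\subseteq [1,N]$ on which $A$ has density at least $\delta(1+c\delta^{\ell-1})$, at the cost of a controlled shrinkage of the ambient length. The density must saturate within $O(\delta^{-(\ell-1)})$ iterations, and balancing this against the per-step length loss yields the advertised bound $|A|/N \ll_h ((\log\log N)/\log N)^{1/(\ell-1)}$ for a generic intersective $h$ of degree $\ell$.

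The improved savings in the cases $\ell=2$ and $h(x)=x^{\ell}$ is the main obstacle, and follows the Pintz--Steiger--Szemer\'edi scheme adapted (in \cite{BPPS}) to pure $\ell$th powers. Rather than extracting a single large Fourier coefficient per iteration, one harvests a whole family of them from the same deficit; each member yields its own increment on an intersection of progressions, and the gain per stage is amplified by a factor logarithmic in the current density. The delicate technical heart is the book-keeping of the accumulating moduli $q_1,q_2,\dots$ via the Chinese Remainder Theorem, together with Ruzsa-type covering arguments to control the length loss at each stage; executed carefully this produces the stated $(\log N)^{-\mu\log\log\log\log N}$ bound for any $\mu<1/\log 3$, the constant $1/\log 3$ reflecting the roughly threefold growth in the complexity of the intermediate progression structure across iterations.
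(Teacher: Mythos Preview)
The paper does not prove Theorem~A. It is stated in the introductory Section~1.2 purely as background from the literature, with citations to \cite{BPPS}, \cite{HLR}, \cite{Lucier}, and \cite{thesis}; no argument for it appears anywhere in the text. So there is nothing in the paper to compare your proposal against.

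For what it is worth, your sketch is a reasonable high-level summary of how the cited works proceed: the S\'ark\"ozy-type density increment driven by a deficit in the count $R(A)$, circle-method control of the exponential sum $S(\xi)$, the use of a root of $h$ modulo $q$ to pass to a subprogression, and the Pintz--Steiger--Szemer\'edi multi-frequency refinement for the sharper cases. A couple of minor inaccuracies: the $\ell=2$ case of the stronger bound is due to \cite{PSS} and its extension to intersective quadratics is \cite{HLR}, not \cite{BPPS} (which handles $h(x)=x^\ell$); and your description of the bookkeeping via ``Ruzsa-type covering arguments'' is not quite how the moduli are managed in those papers. But since the paper under review offers no proof of Theorem~A, these are side remarks rather than a comparison.
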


\begin{thmB}[\cite{Ruz}, \cite{Rice}] Suppose $h\in \Z[x]$ is a $\P$-intersective polynomial of degree $\ell\geq 2$. If $A\subseteq [1,N]$ and $(A-A)\cap h(\P) \subseteq \{0\}$, then 
\begin{equation}\label{bound}\frac{|A|}{N} \ll_{h,\mu} \begin{cases} e^{-c(\log N)^{1/4}} & \text{if } \ell=1 \\ (\log N)^{-\mu} & \text{if } \ell \geq 2\end{cases}\end{equation} 
for any $\mu<1/(2\ell-2)$, where $c>0$ is an absolute constant. 
\end{thmB}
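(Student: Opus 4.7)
The plan is to prove Theorem B by a density-increment argument driven by the Hardy--Littlewood circle method, following the framework established for Sárközy-type results by Sárközy, Ruzsa, Lucier, and Rice. Fix a large prime $N'$ comparable to $N$ and view $A$ as a subset of $\Z_{N'}$. For an intermediate parameter $X$ to be optimized, define the weighted set $R_X$ of values $h(p)$ for $p\in\P\cap[1,X]$, each carrying weight $\log p$. Using the $\P$-intersective hypothesis, select a well-chosen modulus $q=q(h)$ and a residue $r\bmod q$ with $(r,q)=1$ and $q\mid h(r)$, then restrict the primes defining $R_X$ to the class $r\bmod q$. This anchors the relevant values of $h(p)$ inside $q\Z$, which enables the density iteration to proceed inside an arithmetic progression of common difference $q$.

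The core of the argument is to estimate the representation count $T(A)=\sum_{p}\Lambda(p)(1_A*1_{-A})(h(p))$ via a major/minor arc decomposition on $\widehat{\Z_{N'}}$. On the major arcs, a standard singular-series calculation yields a main term of order $|A|^2X/N$, which forces a nonzero $h(p)\in A-A$---contradicting the hypothesis---unless $|A|/N$ is already suitably small. On the minor arcs we require cancellation in the weighted exponential sum $\sum_{p}\Lambda(p)e(\alpha h(p)/N')$: for $\ell=1$ this is the classical Vinogradov estimate over primes, delivering savings of order $e^{-c\sqrt{\log X}}$, while for $\ell\geq 2$ a combination of Vinogradov's Type I/II bilinear machinery with Weyl differencing extracts a power saving of the form $(\log X)^{-1}$, with an additional $q$-dependent factor, off the trivial bound $X$.

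If the minor-arc estimate does not absorb the main term, an $L^2$-pigeonholing extracts a frequency $\xi$ with $|\widehat{1_A}(\xi)|$ large, which by standard Bohr-set/arithmetic-progression arguments yields a subprogression $P$ of length $N_1$ on which the density of $A$ exceeds $\alpha(1+c_0)$, where $\alpha=|A|/N$. Rescaling $P$ to an interval, reducing modulo its common difference (invoking $\P$-intersectivity anew for the enlarged modulus), and restarting the argument produces an iteration that must terminate in $O(\log(1/\alpha))$ steps. In the $\ell=1$ regime, balancing the Vinogradov saving against the polylogarithmic shrinkage of $N$ at each step yields the $e^{-c(\log N)^{1/4}}$ bound after optimization of $X$; in the $\ell\geq 2$ regime, the weaker Weyl--Vinogradov saving compounds into a $(\log N)^{-\mu}$ bound with $\mu<1/(2\ell-2)$.

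The principal obstacle is the fine calibration required for the sharp exponent $1/(2\ell-2)$ to emerge: each Weyl differencing step in the minor-arc estimate halves the savings in the $\ell$-dependent exponent, and the interplay between the major-arc main term ($\sim X/N$) and the minor-arc saving---which itself depends on the current ambient modulus---dictates the optimal choice of $X$ at every stage of the iteration. A secondary technical hurdle is controlling the admissible modulus function of $h$ uniformly throughout: at each iterative step $\P$-intersectivity must be invoked for a modulus larger than the previous, and a classical lifting-the-exponent analysis for intersective polynomials is needed to ensure this $q$ grows only polynomially, so that the accumulated losses do not overwhelm the density increment.
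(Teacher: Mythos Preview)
The paper does not prove Theorem~B. It is stated in Section~1.2 purely as background, summarizing known results from the literature (attributed to \cite{Ruz} for the linear case and \cite{Rice} for $\ell\geq 2$), in order to motivate the GAP questions that are the paper's actual subject. There is therefore no ``paper's own proof'' to compare your proposal against.

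That said, your outline is broadly faithful to how the cited references establish these bounds: a Fourier-analytic density-increment driven by major/minor arc estimates for weighted exponential sums over primes, with the $\P$-intersective hypothesis used to control the auxiliary moduli that appear at each iteration. A few cautions if you intend to flesh this into an actual proof. First, the minor-arc saving you quote for $\ell\geq 2$ (``of the form $(\log X)^{-1}$'') is not what drives the exponent $1/(2\ell-2)$; the sharp exponent in \cite{Rice} comes from balancing the Weyl-type minor-arc bound (which loses a factor of roughly $q^{1/2^{\ell-1}}$ in the local part) against the density increment, and your sketch does not yet isolate that mechanism. Second, your description of the $\ell=1$ case as ``classical Vinogradov'' oversimplifies what is done in \cite{Ruz}: Ruzsa and Sanders use a transference/sieve-majorant argument rather than a direct density increment, and the $(\log N)^{1/4}$ in the exponent (as opposed to $(\log N)^{1/2}$) reflects losses specific to that framework. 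Finally, note the statement in the paper has a cosmetic inconsistency---the hypothesis reads $\ell\geq 2$ while the display contains an $\ell=1$ branch---so be sure which case you are actually targeting.
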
 

The original theorems of S\'ark\"ozy, and the improvements and extensions thereof, are examples of the more general philosophy that sets which are in a sense ``randomly distributed" and avoid certain local biases (such as shifted primes or the image of an intersective polynomial) should always intersect predictably with structured, centrally symmetric sets (such as a difference set). Perhaps the simplest and most explicit example of a highly structured, centrally symmetric set is a low-dimensional symmetric GAP, so the known results for difference sets foreshadow the possibility that similar results for GAPs may be available with more elementary methods and better bounds. In a sense, these questions for GAPs can be interpreted as a ``fantasy model" for the analogous questions for less understood objects like difference sets.

\subsection{Diophantine Approximation and Expected Bounds} We conclude this introductory section with an attempt to contextualize our results as being in some sense parallel to far more well-studied questions in Diophantine approximation, and we attempt to provide some intuition and motivation for what sort of bounds we expect to hold in place of those in Theorems \ref{main1} and \ref{main2}. We speak rather informally, in particular the referenced results are somewhat roughly approximated, and we make light of the distinction between an interval of integers $[-N,N]$ and a finite cyclic group $\Z_N:=\Z/N\Z$. 

We continue the theme of the previous section by considering another related example of a highly structured, centrally symmetric set. Given a natural number $N$, a set of frequencies $\Gamma= \{\xi_1,\dots,\xi_k\}\subseteq \Z_N$, and $\epsilon>0$, the Bohr set $B=B(\Gamma, \epsilon)\subseteq \Z_N$ is defined by $$B=\{x\in \Z_N \ : \ \norm{x\xi_j/N}<\epsilon \text{ for } 1\leq j \leq k\},$$ where $\norm{\cdot}$ denotes the distance to the nearest integer. We refer to $k$ as the \textit{rank} of $B$. If we pose the analogous questions for Bohr sets that we have been considering for symmetric GAPs, then we wade into questions of Diophantine approximation. An example of such an analog is the following result of Green and Tao, a refinement of an argument due to Schmidt \cite{Schmidt}.

\begin{thmC}[Proposition A.2 in \cite{GT}]
Given $\alpha_1,\dots,\alpha_k\in \R$ and $N\in\N$, there exists an integer $1\leq n\leq N$ such that \[\|n^2\alpha_j\|\ll kN^{-c/k^2} \text{ \ for all \ }1\leq j \leq k.\] 
\end{thmC}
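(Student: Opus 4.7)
The plan is to prove this by induction on $k$, establishing in fact the stronger statement that one can find a long arithmetic progression $P \subseteq [1,N]$ on which every $\|n^2 \alpha_j\|$ is simultaneously small. The conclusion then follows by taking any element of $P$.

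For the base case $k=1$, I would combine Dirichlet's approximation theorem with Weyl's inequality applied to the quadratic exponential sum $T(\alpha) = \sum_{n \le N} e(n^2\alpha)$. The standard dichotomy reads: either $\alpha$ admits a rational approximation $a/q$ with $q$ small, in which case one directly produces an $n$ (essentially a multiple of $q$) with $\|n^2\alpha\|$ small, or $T(\alpha) = o(N)$, forcing the values $\{n^2\alpha \bmod 1\}$ to equidistribute and hence some value to lie near $0$. Either branch yields not just a single $n$ but an entire sub-progression of valid $n$'s, which is what the inductive step will consume.

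For the inductive step, suppose $P = \{n_0 + dm : 1 \le m \le M\}$ handles the first $k-1$ frequencies. Expanding
\[
(n_0 + dm)^2 \alpha_k = n_0^2\alpha_k + 2n_0 d\alpha_k \cdot m + d^2\alpha_k \cdot m^2,
\]
the quadratic-in-$m$ piece is controlled by applying the base case with the new frequency $d^2 \alpha_k$ on the range $[1,M]$, while the linear-in-$m$ piece is handled by ordinary one-dimensional Dirichlet approximation on the same range. Composing the two restrictions gives a sub-progression of $P$ that now satisfies all $k$ constraints, which becomes the input for the next iteration.

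The main obstacle is quantitative bookkeeping. Each iteration costs a polynomial factor in both the length of $P$ and the approximation quality, and a careless composition would leave an exponent decaying geometrically in $k$. Producing the claimed $N^{-c/k^2}$ bound requires the losses at each stage to be balanced optimally, which I expect is cleanest via a Fourier-analytic avatar of the iteration that treats all $k$ frequencies at once via Weyl-type bounds on multivariable quadratic exponential sums on $\mathbb{T}^k$. The $k^{-2}$ rather than $k^{-1}$ in the exponent is the well-known signature loss separating polynomial from linear simultaneous Diophantine approximation, and I would expect the sharper Green--Tao refinement over Schmidt's original argument to come precisely from a more careful execution of this balancing step.
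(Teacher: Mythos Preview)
The paper does not prove Theorem C; it is quoted from \cite{GT} (Proposition A.2) purely as background in Section 1.4, so there is no in-paper argument to compare your attempt against.

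On the sketch itself: the broad shape---iterate to sub-progressions, handling one $\alpha_j$ per step via a Weyl-type single-variable input---is indeed the Schmidt scheme that \cite{GT} sharpens. But your inductive step has a real gap. In the expansion $(n_0+dm)^2\alpha_k = n_0^2\alpha_k + 2n_0 d\alpha_k\, m + d^2\alpha_k\, m^2$, the constant term $n_0^2\alpha_k$ is completely uncontrolled, and your plan to treat the quadratic and linear pieces separately (homogeneous base case for $d^2\alpha_k m^2$, Dirichlet for the linear part) never touches it; you would conclude only that $\|n^2\alpha_k\|$ is close to $\|n_0^2\alpha_k\|$, which could be $1/2$. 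The single-frequency lemma one actually iterates must be \emph{inhomogeneous}: given a real quadratic $\beta m^2+\gamma m+\theta$ on $[1,M]$, produce a long sub-progression of $m$ on which $\|\beta m^2+\gamma m+\theta\|$ is small. This is still accessible via Weyl differencing, but it is a different (and slightly harder) statement than your stated base case. Beyond this, the entire quantitative content of the theorem is exactly the ``bookkeeping'' you defer---arranging that $k$ rounds of extraction cost only $N^{c/k^2}$ rather than something doubly exponential in $k$---and gesturing toward a simultaneous treatment on $\T^k$ is not a substitute for carrying it out.
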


\noindent The second and third authors further adapted this argument to include all intersective polynomials. 

\begin{thmD}[Theorem 1 in \cite{LR}] Given $\alpha_1, \dots, \alpha_k \in \R$, an intersective polynomial $h\in \Z[x]$ of degree $\ell$, and $N\in \N$, there exists an integer $1\leq n \leq N$ with $h(n)\neq 0$ and $$\norm{h(n)\alpha_j} \ll_h kN^{-c^{\ell}/k^2} \text{ for all } 1\leq j \leq k,$$ where $c>0$ is an absolute constant.
\end{thmD}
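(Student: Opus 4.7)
The plan is to adapt the iterative Schmidt-Green-Tao argument underlying Theorem C, using the intersective hypothesis to substitute for the trivial divisibility $q\mid (qm)^\ell$ available for the monomial $x^\ell$. Specifically, given any modulus $q$, intersectivity supplies $r\in\Z$ with $q\mid h(r)$, and the multinomial expansion of $h(qm+r)$ around $r$ yields
\[
h(qm+r)=q\,\tilde h(m),\qquad \tilde h\in\Z[m],\ \deg\tilde h=\ell,
\]
with leading coefficient $a_\ell q^{\ell-1}$ (where $a_\ell$ is the leading coefficient of $h$). Thus $h(q\Z+r)\subseteq q\Z$, which is the structural substitute that allows Schmidt's mechanism to proceed for a general intersective $h$ in place of a monomial.

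I would then induct on the number $k$ of frequencies, mirroring the outer loop of the proof of Theorem C. At stage $j$ one has restricted $n$ to a ``Bohr-like" arithmetic progression $\{q_\ast m+r_\ast : m\leq M_\ast\}$ on which $\|h(n)\alpha_i\|$ is already controlled for $i<j$, and one applies a Dirichlet-type pigeonhole on $\R/\Z$ to the values $h(q_\ast m+r_\ast)\alpha_j$ to produce a refined progression on which the $j$-th condition also holds. The refinement is legitimized by passing to a larger modulus and invoking intersectivity at that modulus to preserve the lift. Each stage costs a fractional power in the effective width, so after $k$ stages the exponent picks up a factor of $1/k$, producing the $1/k^2$ in the final rate.

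The $c^\ell$ factor enters at the Dirichlet step. A rational approximation $|\alpha_j-a/q|\leq 1/(qQ)$ yields
\[
\|h(qm+r)\alpha_j\|\ \leq\ |h(qm+r)|\cdot\frac{1}{qQ}\ \ll_h\ \frac{(qm+r)^\ell}{qQ},
\]
and balancing the window size $M$ and pigeonhole parameter $Q$ against the range constraint $qm+r\leq N$ costs an exponent that is exponential in $\ell$. Combined with the $1/k^2$ from the outer induction, this yields the advertised rate $N^{-c^\ell/k^2}$. Ensuring $h(n)\ne 0$ is routine: since $h$ has at most $\ell$ integer roots, at most $O_h(1)$ candidates are discarded, absorbed into the $\ll_h$ constant.

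The principal obstacle is maintaining compatibility of the intersective lift across the $k$ iterations: the modulus $q_j$ produced at stage $j$ must divide the modulus carried into stage $j+1$, which requires invoking intersectivity at the least common multiple of all moduli built so far. One must verify that this compounded modulus remains polynomially bounded in $N$, and that the coefficient inflation in $\tilde h$ (leading coefficient growing like $q^{\ell-1}$) does not corrupt the parameter balance at later Dirichlet steps. Careful tracking is required so that the exponential-in-$\ell$ losses consolidate into a single $c^\ell$ factor without contaminating the $1/k^2$ rate.
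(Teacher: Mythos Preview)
The paper does not prove Theorem~D at all; it is quoted verbatim from \cite{LR} (Lyall--Rice) as motivational background in Section~1.4 and is never revisited. There is therefore no ``paper's own proof'' to compare your proposal against.

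For what it is worth, your outline---use intersectivity to write $h(qm+r)=q\,\tilde h(m)$ and feed this into the Schmidt/Green--Tao iteration over the $k$ frequencies---is the natural adaptation and is essentially the strategy carried out in the cited source. Your sketch is, however, rather loose at the core analytic step: the Schmidt mechanism is not a bare Dirichlet pigeonhole on $\R/\Z$ but an averaging argument driven by Weyl-type exponential sum bounds (this is what forces a good rational approximation to $\alpha_j$ and is where the exponential-in-$\ell$ loss genuinely enters). The ``balance $M$ against $Q$'' heuristic you give does not by itself explain why the exponent degrades by $1/k$ per stage, nor why the outer loop contributes a second factor of $1/k$. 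If you intend to write this out, the place requiring real care is the interaction between the growing modulus (and hence the inflated leading coefficient $a_\ell q^{\ell-1}$ of $\tilde h$) and the Weyl estimate at the next stage; your final paragraph correctly flags this, but it is the substance of the proof rather than a side issue.
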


In particular, Theorem D says that a Bohr set $B\subseteq \Z_N$ of rank $k$ contains a nonzero element in the image of $h$ provided the radius is at least a large constant times $kN^{-c^{\ell}/k^2}$. Probabilistically, we expect a Bohr set $B\subseteq \Z_N$ of rank $k$ and radius $\epsilon$ to have size about $\epsilon^kN$, suggesting that if $B\cap h(\N) \subseteq \{0\}$ and $k$ is much smaller than $\sqrt{\log N}$, then \begin{equation}\label{BB} |B|\ll_h N^{1-c^{\ell}/k}.\end{equation} It is conjectured that the $k^2$ in the exponent in Theorem B can be replaced with $k^{1+o(1)}$, suggesting  bounds even stronger than (\ref{BB}). 

 Bohr sets and symmetric GAPs can be thought of as similar, intimately related objects. Like a symmetric GAP, a Bohr set is a centrally symmetric set with a great deal of structure mimicking that of a subgroup. It is a standard fact (see \cite{TV} for example) that a Bohr set $B\subseteq \Z_N$ of rank $k$ and radius $\epsilon$ contains a symmetric GAP $A\subseteq \Z_N$ (defined analogously to a GAP in $\Z$) of dimension $k$ with $|A|\geq (\epsilon/k)^kN$. Conversely, if $A\subseteq \Z_N$ is a symmetric GAP of dimension $k$ with $|A|=\delta N$, then  work of Sanders \cite{Sanders} and Croot, Laba, and Sisask \cite{CLS} shows that $A$ contains a Bohr set of rank roughly $\log(1/\delta)+k^4$ and radius roughly $2^{-k}$. The latter conclusion combines with Theorem D to yield 
 \begin{equation}\label{ht} h\in \Z[x] \text{ intersective }, \ A\cap h(\Z) \subseteq\{0\} \implies |A|\ll N\exp\big(c(k^4-\sqrt{\log N/k})\big).
\end{equation}

Compared to Theorem \ref{main1}, this result is nontrivial for a much larger range of dimensions,  up to about $k=(\log N)^{1/9}$, but requires much heavier machinery and provides unsatisfying bounds in low dimensions. In contrast, our proofs of Theorems \ref{main1} and \ref{main2} rely only on very basic Fourier analytic methods and standard number theory and circle method facts.

Due to the aforementioned intimate connections between symmetric GAPs and Bohr sets, we believe that the analysis of polynomial configurations in Bohr sets via Diophantine approximation provides meaningful insight into the analogous questions for GAPs. The structure of a Bohr set is richer than that of a GAP, so for now we tread lightly with our conjectures, but these heuristics for bounds like (\ref{BB}) (or beyond) suggest that the doubly exponential dependence on the dimension $k$  in Theorems \ref{main1} and \ref{main2} is far from the truth. We return to the consideration of improved bounds in Section \ref{special}.

\begin{center} \textbf{Funding} \end{center}
The first author was partially supported by National Science Foundation Grant DMS-1001111. The second author was partially supported by Simons Foundation Collaboration Grant for Mathematicians 245792.
 
  \begin{center} \textbf{Acknowledgements} \end{center}
The authors would like to thank Paul Pollack for helpful comments, as well as the referees for their helpful corrections and recommendations. 
  
\section{Preliminaries} 

\subsection{Fourier Analysis on $\Z/d\Z$} For $d\in \N$ and a complex-valued function $f$ on $\Z_d:=\Z/d\Z$, we utilize the unnormalized, discrete Fourier transform $\widehat{f}:\Z_d\to \C$, defined by 
\[\widehat{f}(t)=\sum_{x\in \Z_d}f(x)e^{-2\pi i xt/d}.\]
\noindent In this discrete setting, the Fourier inversion formula 
\begin{equation}\label{inv} f(x)=\frac{1}{d}\sum_{t\in \Z_d}\widehat{f}(t)e^{2\pi i xt/d}\end{equation} 
is a simple consequence of the orthogonality relation 
\begin{equation}\label{orth} \frac{1}{d}\sum_{t\in \Z_d} e^{2\pi i x t/d} = \begin{cases} 1 & \text{if } x=0 \\ 0 & \text{if } x\in \Z_d\setminus \{0\} \end{cases}.
\end{equation}
For two functions $f,g:\Z_d \to \C$, we define the convolution $f*g:\Z_d \to \C$ by 
\[f*g(x)=\sum_{y\in \Z_d}f(y)g(x-y),\]
and the property 
\begin{equation} \label{conv} \widehat{f*g}(t)=\widehat{f}(t)\widehat{g}(t)
\end{equation}
also follows quickly from (\ref{orth}).

\subsection{Exponential Sum Estimates} The key to the argument, as is often the case, is to take advantage of cancellation in exponential sums over the image of a polynomial away from rationals with small denominator. First, we include a proof of the standard estimate in the special case of quadratic polynomials, with a view toward a self-contained proof of Theorem \ref{modelI}.

\begin{lemma} \label{gauss} If $t, a_1,a_2 \in \Z$ and $n,d\in \N$, then 
$$\Big|\sum_{m=1}^ne^{2\pi i (a_1m+a_2m^2)t/d} \Big| \leq \Big(2n^2/d'+7(n+d')\log d'\Big)^{1/2},$$
where $d'=d/(2a_2t,d)$. 
\end{lemma}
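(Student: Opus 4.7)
The plan is to apply the standard Weyl differencing (van der Corput) trick and then carefully bound the resulting sum over frequencies using the reduction of $2a_2t/d$ to lowest terms with denominator $d'$.

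First I would square the sum, writing
$$\Big|\sum_{m=1}^n e^{2\pi i(a_1m+a_2m^2)t/d}\Big|^2 = \sum_{m,m'=1}^n e^{2\pi i(f(m)-f(m'))}$$
where $f(m)=(a_1m+a_2m^2)t/d$. Substituting $h=m-m'$ and computing
$$f(m)-f(m-h) = \frac{ht(a_1+a_2(2m-h))}{d},$$
the quadratic dependence on $m$ cancels, and the $m$-dependence collapses to a single linear phase $e^{2\pi i(2a_2th/d)m}$. After separating the term $h=0$ and exploiting symmetry in $h\leftrightarrow -h$, this gives
$$|S|^2 \leq n + 2\sum_{h=1}^{n-1}\min\!\left(n-h,\ \frac{1}{2\|2a_2th/d\|}\right),$$
using the standard geometric series bound $|\sum_{m=a}^{b}e^{2\pi i\alpha m}|\leq \min(b-a+1,1/(2\|\alpha\|))$.

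Next I would use that $2a_2t/d = A/d'$ in lowest terms, where $d'=d/(2a_2t,d)$ and $\gcd(A,d')=1$. So $\|2a_2th/d\|=\|Ah/d'\|$. Splitting $h\in[1,n-1]$ according to $r_h:=Ah\bmod d'$, the terms with $d'\mid h$ contribute at most $n\cdot\lfloor(n-1)/d'\rfloor\leq n^2/d'$. For the remaining $h$, I use that in each consecutive window of $d'$ values of $h$ the residue $Ah\bmod d'$ hits each nonzero value exactly once, so for each $k\in\{1,\dots,\lfloor d'/2\rfloor\}$ the count of $h\in[1,n-1]$ with $\min(r_h,d'-r_h)=k$ is at most $2(n/d'+1)$. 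Summing the corresponding bounds $d'/(2k)$ over $k$ produces a harmonic sum $\sum_{k=1}^{\lfloor d'/2\rfloor}1/k\ll \log d'$, yielding
$$\sum_{h=1}^{n-1}\min\!\left(n,\ \frac{1}{2\|Ah/d'\|}\right) \leq \frac{n^2}{d'} + (n+d')(1+\log d').$$

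Combining these estimates produces $|S|^2 \leq 2n^2/d' + C(n+d')\log d'$, and a final rough-and-ready cleanup of the constants (absorbing the leftover $+n$ term and the $+1$ inside $1+\log d'$) yields the stated inequality with $C=7$. The main obstacle is purely bookkeeping: tracking constants cleanly through the dyadic/period counting of residues $Ah\bmod d'$ and verifying that the degenerate cases $d'=1,2$ do not destroy the bound. The quadratic-cancellation algebra and the reduction to distance-to-nearest-integer sums are routine; the essential input is simply that $\gcd(A,d')=1$ makes $h\mapsto Ah\bmod d'$ a bijection on each period.
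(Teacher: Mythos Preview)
Your proposal is correct and follows essentially the same route as the paper: square the sum via Weyl differencing, reduce the resulting linear phase $2a_2th/d$ to lowest terms $t'/d'$, apply the geometric-series bound to obtain $\sum_{h=1}^{n-1}\min\{n,(2\|ht'/d'\|)^{-1}\}$, and then estimate this sum using the bijectivity of $h\mapsto ht'\bmod d'$ on each period of length $d'$. The paper packages the last step slightly differently---bounding the sum by $(n/d'+1)(n+\sum_{h=1}^{d'-1}d'/h)$ in one line rather than separating the $d'\mid h$ terms first---but the content and constants are the same.
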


\begin{proof} Using a change of variables ($m=s+h$), we see
\begin{align*}\Big|\sum_{m=1}^ne^{2\pi i (a_1m+a_2m^2)t/d} \Big|^2 &=\sum_{m,s=1}^ne^{2\pi i \bigl((a_1m+a_2m^2)-(a_1s+a_2s^2)\bigr)t/d}=\sum_{s=1}^{n}\sum_{h=1-s}^{n-s}e^{2\pi i\bigl(a_2(2sh+h^2)+a_1h\bigr)t/d} \\\\ & = n + \sum_{h=1}^{n-1} \sum_{s=1}^{n-h}e^{2\pi i\bigl(a_2(2sh+h^2)+a_1h\bigr)t/d}+\sum_{h=1-n}^{-1} \sum_{s=1-h}^{n}e^{2\pi i\bigl(a_2(2sh+h^2)+a_1h\bigr)t/d} \\\\ &\leq n +2\sum_{h=1}^{n-1}\Big|\sum_{s=1}^{n-h}e^{2\pi ish(2a_2t)/d}\Big|.
\end{align*}
\noindent Writing $2a_2t/d=t'/d'$ with $(t',d')=1$, letting $\norm{\cdot}$ denote distance to the nearest integer, and applying the geometric series formula, we have
\begin{align*}\sum_{h=1}^{n-1}\Big|\sum_{s=1}^{n-h}e^{2\pi isht'/d'}\Big|\leq \sum_{h=1}^{n-1} \min \{n, (2\norm{ht'/d'})^{-1}\} \leq \Big(\frac{n}{d'}+1\Big)\Big(n+\sum_{h=1}^{d'-1} \frac{d'}{h}\Big) \leq n^2/d'+3(n+d')\log d',
\end{align*}
provided $d'>1$ (the lemma is trivial if $d'=1$), and the result follows. \end{proof}

\noindent For more general polynomials, we first invoke the following standard bound obtained from Weyl differencing.

\begin{lemma} [Proposition 8.2 in \cite{IK}] \label{IKlem} If $h(x)=\alpha x^{\ell}+\cdots \in \R[x]$, then $$\Big| \sum_{m=1}^n e^{2\pi i h(m)}\Big| \ll n \Big(n^{-\ell} \sum_{-n<m_1,\dots, m_{\ell-1}<n} \min \{ n, \norm{\alpha \ell! m_1\dots m_{\ell-1}}^{-1}\}\Big)^{2^{1-\ell}}.$$
\end{lemma}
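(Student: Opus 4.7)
The plan is to prove this by the classical Weyl differencing argument: iteratively apply Cauchy--Schwarz to the exponential sum, reducing the degree of the polynomial in the exponent by one at each step, until only a linear exponential sum remains that can be evaluated by the geometric series formula.

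To set up the induction, I would introduce the finite difference operator $\Delta_h g(x) := g(x+h) - g(x)$ and observe that if $g(x) = \beta x^d + \cdots$, then $\Delta_h g(x) = d \beta h \cdot x^{d-1} + \cdots$ has degree $d-1$. Iterating starting from $h(x) = \alpha x^\ell + \cdots$ yields
\[
\Delta_{h_{\ell-1}} \cdots \Delta_{h_1} h(s) \;=\; \alpha \,\ell!\, h_1 h_2 \cdots h_{\ell-1} \cdot s \;+\; C(h_1, \dots, h_{\ell-1}),
\]
which is linear in $s$ with constant term $C$ independent of $s$.

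The workhorse identity is
\[
\Big|\sum_{s \in I} e^{2\pi i g(s)}\Big|^2 \;=\; \sum_{s, s' \in I} e^{2\pi i (g(s) - g(s'))} \;=\; \sum_{|h| < |I|} \sum_{s \in I_h} e^{2\pi i \Delta_h g(s)},
\]
obtained by substituting $s' = s + h$; the inner interval $I_h \subseteq I$ depends on $h$ but has length at most $|I|$. Applied directly to $S$, this reduces the polynomial in the exponent from degree $\ell$ to degree $\ell - 1$. Before a second differencing, one invokes Cauchy--Schwarz in the outer variable $h_1$ (which ranges over fewer than $2n$ values), obtaining $|S|^4 \leq 2n \sum_{h_1} |\text{inner}|^2$, and then applies the identity again to each inner square. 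Iterating in this fashion $\ell - 1$ times and tracking the cumulative Cauchy--Schwarz losses via the recursion $a_{k+1} = 2a_k + k$ with $a_1 = 0$ (explicit solution $a_k = 2^k - k - 1$) produces
\[
|S|^{2^{\ell-1}} \;\leq\; (2n)^{2^{\ell-1}-\ell} \sum_{|h_1|, \dots, |h_{\ell-1}| < n} \Big| \sum_{s \in I_{\vec h}} e^{2\pi i (\alpha \,\ell!\, h_1 \cdots h_{\ell-1}\, s \,+\, C)} \Big|.
\]

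The innermost sum is geometric in $s$, so its modulus is at most $\min\{n, \norm{\alpha \,\ell!\, h_1 \cdots h_{\ell-1}}^{-1}\}$ up to an absolute constant. Substituting this and taking $2^{1-\ell}$-th powers of both sides gives exactly the stated bound. The main obstacle is the exponent bookkeeping: verifying that the Cauchy--Schwarz losses accumulate precisely as $(2n)^{2^{\ell-1}-\ell}$ via the recursion above, and confirming that the iterated differences linearize with the correct leading coefficient $\alpha \,\ell!\, h_1 \cdots h_{\ell-1}$. Neither task is conceptually deep, but the notation quickly becomes heavy, which is presumably why the authors defer to Iwaniec--Kowalski for the details.
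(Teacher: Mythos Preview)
Your argument is correct: this is the standard Weyl differencing proof, and your exponent bookkeeping via the recursion $a_{k+1}=2a_k+k$, $a_1=0$, yielding $(2n)^{2^{\ell-1}-\ell}$, is accurate. Note, however, that the paper does not actually prove this lemma; it is quoted without proof as Proposition~8.2 of Iwaniec--Kowalski, and your sketch is precisely the classical argument one finds there.
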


\noindent From Lemma \ref{IKlem}, we deduce the following refined version of Weyl's Inequality by exploiting the second moment, as opposed to the maximum value, of an appropriate divisor function.

\begin{lemma} \label{weylI}  Suppose $h(x)=a_0+a_1x+\cdots+a_{\ell}x^{\ell}$ with $a_i \in \R$ and $a_{\ell} \in \N$. If $(t,d)=1$ and $|\alpha-t/d|<d^{-2}$, then 
$$ \Big|\sum_{m=1}^n e^{2\pi i h(m)\alpha} \Big| \ll_{\ell} n \Big(a_{\ell}\log^{\ell^2}(a_{\ell}dn)(1/d+1/n+d/a_{\ell}n^{\ell}) \Big)^{2^{-\ell}}.$$
\end{lemma}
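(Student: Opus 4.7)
My plan is to apply Lemma~\ref{IKlem} to the polynomial $\alpha h(x)$, whose leading coefficient is $\alpha a_\ell$. Writing $\beta=a_\ell\ell!\,\alpha$ and grouping the $(\ell-1)$-fold inner sum by the product $M=m_1\cdots m_{\ell-1}$, this reduces matters to bounding
$$S=\sum_{|M|\leq n^{\ell-1}}r(M)\min\bigl\{n,\|\beta M\|^{-1}\bigr\},$$
where $r(M)$ counts the $(\ell-1)$-tuples of integers in $(-n,n)$ with product $M$. Since Lemma~\ref{IKlem} raises the normalized inner sum to the $2^{1-\ell}$ power, upgrading this to the advertised $2^{-\ell}$ amounts to proving
$$S^2\ll_\ell a_\ell n^{2\ell}\log^{\ell^2}(a_\ell dn)\bigl(1/d+1/n+d/(a_\ell n^\ell)\bigr).$$
The $M=0$ contribution to $S$ is trivially $O_\ell(n^{\ell-1})$, whose square is absorbed into the middle term $a_\ell n^{2\ell-1}$ on the right.

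On $M\neq 0$ I would apply Cauchy--Schwarz, splitting $S$ as $\bigl(\sum r(M)^2\bigr)^{1/2}\bigl(\sum\min\{n,\|\beta M\|^{-1}\}^2\bigr)^{1/2}$. This is precisely where the second-moment refinement promised in the statement enters: in place of the pointwise divisor bound $\tau_{\ell-1}(M)\ll_\epsilon M^\epsilon$ underlying classical Weyl, the well-known mean-square estimate $\sum_{M\leq X}\tau_{\ell-1}(M)^2\ll_\ell X\log^{(\ell-1)^2-1}X$, combined with $r(M)\ll_\ell\tau_{\ell-1}(|M|)$, yields $\sum_{M\neq 0}r(M)^2\ll_\ell n^{\ell-1}\log^{(\ell-1)^2-1}n$.

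For the companion sum, the estimate $\min\{n,x\}^2\leq n\min\{n,x\}$ reduces matters to a linear exponential sum. To avoid having to verify $|\beta-p/q|<1/q^2$ for the reduced approximation to $\beta$ (which may fail because $a_\ell\ell!$ need not be nearly a divisor of $d$), I would substitute $M'=a_\ell\ell!\,M$ so $\|\beta M\|=\|\alpha M'\|$ and over-estimate by summing over every integer $|M'|\leq a_\ell\ell!\,n^{\ell-1}$. The standard length-$d$ block argument applied to $\alpha$, using $(t,d)=1$ and $|\alpha-t/d|<1/d^2$, then gives
$$\sum_{|M|\leq n^{\ell-1}}\min\bigl\{n,\|\beta M\|^{-1}\bigr\}\ll_\ell\Bigl(\frac{a_\ell n^{\ell-1}}{d}+1\Bigr)(d\log d+n).$$
Multiplying this by $n$, combining with the divisor estimate, and collecting all logarithmic factors into the single $\log^{\ell^2}(a_\ell dn)$ completes the argument. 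The conceptually decisive step is the use of second-moment information in Cauchy--Schwarz; the main obstacle in execution is simply bookkeeping, namely checking that each of the four cross-terms $a_\ell n^{2\ell}/d,\ a_\ell n^{2\ell-1}\log d,\ n^\ell d\log d,\ n^{\ell+1}$ produced when the divisor estimate meets the linear-sum estimate fits inside one of the three terms of $a_\ell n^{2\ell}F$, which is immediate since $n^{\ell+1}\leq n^{2\ell-1}$ for $\ell\geq 2$.
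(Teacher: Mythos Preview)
Your proposal is correct and follows essentially the same route as the paper: apply Lemma~\ref{IKlem}, separate the $M=0$ contribution, Cauchy--Schwarz against the second moment of $\tau_{\ell-1}$, use $\min\{n,x\}^2\le n\min\{n,x\}$, and finish with the classical block estimate for $\sum\min\{n,\|\alpha\ell!a_\ell m\|^{-1}\}$ (which the paper cites as Lemma~2.2 in \cite{vaughan}, while you spell out the substitution $M'=a_\ell\ell!M$ to reduce to the bare approximation $|\alpha-t/d|<d^{-2}$). The final bookkeeping you describe matches the paper's.
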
 
 
\begin{proof}We begin by recalling that if $d_j(m)=\Big|\{(a_1,\dots,a_j): a_i \in \N, \ a_1\cdots a_j=m\}\Big|,$ then we know from standard estimates (see \cite{IK} for example) that
\begin{equation} \label{div} \sum_{m=1}^M d_j(m)^2 \ll_j M\log^{j^2-1}(M).
\end{equation}
Applying Lemma \ref{IKlem}, Cauchy-Schwarz, and (\ref{div}), we have
\begin{align*} \Big| \sum_{m=1}^n e^{2\pi i h(m)\alpha}\Big| &\ll n \Big(n^{-\ell} \sum_{-n<m_1,\dots, m_{\ell-1}<n} \min \{ n, \norm{\alpha \ell!a_{\ell} m_1\dots m_{\ell-1}}^{-1}\}\Big)^{2^{1-\ell}} \\\\ & \ll_{\ell} n^{1-\ell2^{1-\ell}} \Big(n^{\ell-1} + \sum_{1\leq m \leq n^{\ell-1}} d_{\ell-1}(m)\min\{n, \norm{\alpha \ell! a_{\ell}m}^{-1} \} \Big)^{2^{1-\ell}} \\\\ & \leq n^{1-\ell2^{1-\ell}} \Big(n^{\ell-1} + \bigl(\sum_{1\leq m \leq n^{\ell-1}} d_{\ell-1}(m)^2 \bigr)^{1/2} \bigl(\sum_{1\leq m \leq n^{\ell-1}} \min\{n, \norm{\alpha \ell! a_{\ell}m}^{-1} \}^2  \bigr)^{1/2} \Big)^{2^{1-\ell}} \\\\ & \ll_{\ell} n^{1-\ell2^{1-\ell}} \Big(n^{\ell-1} + n^{\ell/2}\log^{\frac{\ell^2-1}{2}}(n) \bigl(\sum_{1\leq m \leq n^{\ell-1}} \min\{n, \norm{\alpha \ell! a_{\ell}m}^{-1}\}   \bigr)^{1/2}\Big)^{2^{1-\ell}}.
\end{align*}
By Lemma 2.2 in \cite{vaughan}, we know that if $|\alpha-t/d|<d^{-2}$ with $(t,d)=1$, then
$$ \sum_{1\leq m \leq n^{\ell-1}} \min\{n, \norm{\alpha \ell!a_{\ell}m}^{-1}\} \ll a_{\ell}n^{\ell}\log(a_{\ell}dn)\Big(1/d+1/n+d/a_{\ell}n^{\ell} \Big),$$ and the result follows.
\end{proof}

\noindent For the primes we invoke the following analog of Weyl's inequality, a nominal generalization of Theorem 4.1 in \cite{lipan}, extracted in a slightly more precise way as compared to Lemma 12 in \cite{Rice}.

\begin{lemma}\label{min} Suppose $h(x)=a_0+a_1x+\cdots+a_{\ell}x^{\ell} \in \Z[x]$ with $a_{\ell}>0$,  and let $L=64\ell^24^{\ell}$. If $U\geq \log n$,
$a_{\ell} \geq C\Big(|a_{\ell-1}| + \cdots +|a_0|\Big),$ and $q,|r|,a_{\ell} \leq U^{\ell}$, then 
\begin{equation*}\sum_{\substack{m=1 \\ qm+r \in \P}}^n \log(qm+r)e^{2\pi i h(m) \alpha} \ll_C \frac{n}{U}+U^Ln^{1-4^{-\ell}}
\end{equation*} 
provided \begin{equation*} |\alpha -t/d| < d^{-2} \quad \text{for some} \quad U^{L} \leq d \leq h(n)/U^{L} \quad \text{and} \quad (t,d)=1.
\end{equation*}
\end{lemma}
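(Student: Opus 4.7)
The plan is to adapt the Vaughan identity approach used in Theorem 4.1 of \cite{lipan} and Lemma 12 of \cite{Rice} to the polynomial setting under the slightly relaxed hypothesis on the lower-order coefficients. The target bound $n/U + U^L n^{1-4^{-\ell}}$ is of the standard Vinogradov shape: $n/U$ absorbs terms that would show major-arc behavior if $\alpha$ were too well-approximated, while $U^L n^{1-4^{-\ell}}$ is a Weyl-type saving where the factor $4^{-\ell}$ (rather than the pure Weyl $2^{1-\ell}$) reflects the extra loss incurred by Vaughan's identity compared to the direct Weyl bound of Lemma \ref{weylI}. The exponent $L=64\ell^24^\ell$ is essentially what emerges from balancing Type I/II contributions and from the $2^{1-\ell}$ power loss in Lemma \ref{IKlem}.

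First, I would replace $\log(qm+r)$ by the von Mangoldt weight $\Lambda(qm+r)$, noting that higher prime-powers and the difference between the two weights contribute only $O(n^{1/2}\log n)$, which is absorbed by $U^L n^{1-4^{-\ell}}$. Then I would invoke Vaughan's identity at a cutoff parameter $V$ (eventually taken to be a suitable power of $U$), writing the resulting $S(\alpha):=\sum_{m=1}^n \Lambda(qm+r)e^{2\pi i h(m)\alpha}$ as a bounded number of Type I sums of shape
\[\sum_{d\leq V}c_d\sum_{\substack{1\leq m\leq n\\ d\mid qm+r}}e^{2\pi i h(m)\alpha},\qquad |c_d|\ll \log n,\]
plus Type II bilinear sums
\[\sum_{V<d\leq n/V}\sum_{k}a_d b_k\, e^{2\pi i h(m)\alpha},\qquad qm+r=dk, \ |a_d|,|b_k|\ll \log n.\]

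For the Type I sums, the condition $d\mid qm+r$ constrains $m$ to a single arithmetic progression modulo $d'=d/(d,q)$; writing $m=m_0+d'm''$ with $1\leq m''\leq n/d'$, the composition $\tilde h(m''):=h(m_0+d'm'')$ is a polynomial of degree $\ell$ in $m''$ with leading coefficient $a_\ell (d')^\ell$. Here the hypotheses $a_\ell\geq C(|a_{\ell-1}|+\cdots+|a_0|)$ and $q,|r|\leq U^\ell$ ensure that the substitution does not inflate the lower-order coefficients beyond $a_\ell(d')^\ell/C'$, so the shape required for Lemma \ref{weylI} is preserved. Applying Lemma \ref{weylI} with the Diophantine datum inherited from $|\alpha-t/d|<d^{-2}$ yields the Type I contribution.

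For Type II, I would apply Cauchy--Schwarz in $k$ to obtain
\[\Big|\sum_{d,k}a_d b_k\, e^{2\pi i h((dk-r)/q)\alpha}\Big|^2 \ll \|b\|_2^2 \sum_{k_1,k_2} a_{k_1}\bar a_{k_2} \sum_d e^{2\pi i(h((dk_1-r)/q)-h((dk_2-r)/q))\alpha},\]
so that the inner polynomial difference in $d$ has degree $\ell-1$, lending itself to an iterated Weyl-type estimate (or equivalent direct application of Lemma \ref{IKlem}). Balancing $V$ against $n/V$ in the two regimes gives the claimed $n^{1-4^{-\ell}}$ saving, with the $U$ and $U^L$ factors tracking the Diophantine range $U^L\leq d\leq h(n)/U^L$.

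The main obstacle will be the bookkeeping after the Type I substitution: verifying that the Diophantine hypothesis on $\alpha$ transfers correctly to the new polynomial $\tilde h$ with leading coefficient $a_\ell(d')^\ell$, checking that this new leading coefficient stays within the allowable range to invoke Lemma \ref{weylI}, and confirming that the lower-order coefficients remain dominated (by a factor depending on $C$) after the affine substitution. Optimizing the Vaughan cutoff $V$ to equalize Type I and Type II losses then pins down the exponent $L=64\ell^2 4^\ell$, and the final simplification $n/U+U^L n^{1-4^{-\ell}}$ drops the subdominant Diophantine factors that appear in the raw Lemma \ref{weylI} bound.
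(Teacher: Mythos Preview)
The paper does not give a proof of this lemma; it is quoted as a black box, described as ``a nominal generalization of Theorem 4.1 in \cite{lipan}, extracted in a slightly more precise way as compared to Lemma 12 in \cite{Rice}.'' Your outline via Vaughan's identity, Type~I/Type~II decomposition, and Weyl differencing is precisely the machinery used in those cited references, so there is nothing to contrast: you are reconstructing the argument the paper imports wholesale.

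One point to correct in your sketch: in the Type~II step, after Cauchy--Schwarz the phase difference $h((dk_1-r)/q)-h((dk_2-r)/q)$, viewed as a polynomial in $d$, still has degree $\ell$, not $\ell-1$; its leading coefficient is $a_\ell q^{-\ell}(k_1^\ell-k_2^\ell)$. The degree does not drop---you are perhaps conflating this with the Weyl differencing step inside Lemma~\ref{IKlem}, where each difference does lower the degree by one. This slip does not derail the strategy (you apply Lemma~\ref{IKlem} to a degree-$\ell$ polynomial in $d$ rather than a degree-$(\ell-1)$ one), but it means your explanation of the exponent $4^{-\ell}$ is off: the halving of the Weyl saving $2^{1-\ell}$ comes from the square-root loss in the Type~II Cauchy--Schwarz and from balancing the bilinear ranges $V<d\leq n/V$, not from a reduction in polynomial degree.
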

\noindent Because Lemma \ref{min} does not give any information for very small denominators, we require the following high-moment estimate, which follows in particular from a solution to the Waring-Goldbach problem (see \cite{Hua} for example).

\begin{lemma} \label{moment} If $h\in \Z[x]$ with $\deg(h)=\ell$ and $s>2^{\ell}$, then
$$\sum_{t\in \Z_d} \Big| \sum_{\substack{m=1 \\ qm+a \in \P}}^n \log(qm+a)e^{2\pi i h(m) t/d} \Big|^s \ll_h dn^{s-\ell} + n^s.$$ 
\end{lemma}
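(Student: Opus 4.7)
The plan is to expand the $s$-th moment via orthogonality, converting the left-hand side into a weighted count of polynomial solutions which is then bounded using Waring-Goldbach. Setting
\[ f(\alpha) = \sum_{\substack{m=1 \\ qm+a \in \P}}^n \log(qm+a)\, e^{2\pi i h(m)\alpha}, \]
the left-hand side is precisely $\sum_{t \in \Z_d} |f(t/d)|^s$, and the essential case is $s = 2k$ with $2k$ an even integer greater than $2^\ell$.

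For this main case, expanding $|f(t/d)|^{2k} = f(t/d)^k \overline{f(t/d)}^k$ and summing in $t$ via the orthogonality relation (\ref{orth}) yields
\[ \sum_{t \in \Z_d} |f(t/d)|^{2k} = d \sum_{j \in \Z} N(jd), \]
where $N(u)$ is the weighted count of tuples $(m_1, \dots, m_k, m_1', \dots, m_k') \in [1, n]^{2k}$ with each $qm_i + a$ and $qm_i' + a$ prime and
\[ h(m_1) + \cdots + h(m_k) - h(m_1') - \cdots - h(m_k') = u, \]
weighted by $\prod_{i=1}^k \log(qm_i + a) \log(qm_i' + a)$.

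Two observations then control $\sum_j N(jd)$. First, since $|h(m)| \ll_h n^\ell$ for $m \in [1,n]$, the summand $N(jd)$ vanishes unless $|j| \ll_h n^\ell/d$, leaving at most $O_h(n^\ell/d + 1)$ contributing terms. Second, the Fourier identity
\[ N(u) = \int_0^1 |f(\alpha)|^{2k}\, e^{-2\pi i u \alpha}\, d\alpha \]
combined with the nonnegativity of $|f(\alpha)|^{2k}$ produces the uniform bound $N(u) \leq N(0) = \int_0^1 |f(\alpha)|^{2k}\, d\alpha$. Since $N(0)$ is the weighted count of prime solutions to $\sum_i h(m_i) = \sum_i h(m_i')$, the classical Hua inequality (see \cite{Hua}) gives $N(0) \ll_h n^{2k-\ell}$ for $2k > 2^\ell$. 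Combining these,
\[ \sum_{t \in \Z_d} |f(t/d)|^{2k} \ll_h d\Bigl(\frac{n^\ell}{d} + 1\Bigr) n^{2k-\ell} = n^{2k} + d n^{2k - \ell}, \]
as desired.

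The principal obstacle is supplying the Waring-Goldbach bound $N(0) \ll_h n^{2k - \ell}$ for a general integer polynomial $h$ of degree $\ell$ rather than for the pure power $x^\ell$. This extension is standard: the Weyl- and Hua-type exponential sum estimates driving the circle-method proof of Hua's inequality depend only on the leading coefficient $a_\ell$ of $h$ and incur merely an $O_h(1)$ loss when replacing $x^\ell$ by $h(x) = a_\ell x^\ell + \cdots$, so Hua's argument transfers with only cosmetic modifications.
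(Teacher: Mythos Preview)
Your argument is correct and is exactly what the paper intends: the paper gives no proof of this lemma, merely remarking that it ``follows in particular from a solution to the Waring--Goldbach problem'' and citing \cite{Hua}, and your orthogonality expansion together with the uniform bound $N(u)\le N(0)\ll_h n^{2k-\ell}$ from Hua's mean-value estimate is precisely how one unpacks that citation.

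The only loose end is that you handle even $s=2k$ and declare this the essential case, whereas the application in Lemma~\ref{itn2} actually uses the odd value $s=2^{\ell}+1$. Descending from the even moment $2k=2^{\ell}+2$ via H\"older loses a genuine power of $n$, so the clean reduction is instead to go \emph{up} from the endpoint $2k=2^{\ell}$: Hua's lemma there gives $N(0)\ll_{h,\epsilon} n^{2^{\ell}-\ell+\epsilon}$, and pulling out one factor of $\|f\|_{\infty}\ll n$ yields the stated bound for $s=2^{\ell}+1$ with an $n^{\epsilon}$ loss that is harmless in the application. This is entirely routine, but since the lemma as stated carries no $\epsilon$ and the paper's own use of it is at an odd exponent, it is worth one sentence.
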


\subsection{Primes in Arithmetic Progressions} In order to obtain the bound purported in Theorem \ref{main2}, we need a quantitative strengthening of Linnik's theorem, which says that if $(a,q)=1$, then there are plenty of primes less than a parameter $x$ which are congruent to $a$ modulo $q$, even if $q$ is as large as a small power of $x$. This exceeds, at the expense of a sharp asymptotic formula, the allowed modulus size for usual prime number theorems for arithmetic progressions.

\begin{lemma}[Quantitative Linnik's theorem, Corollary 18.8 in \cite{IK}] \label{qlin} If $(a,q)=1$ and $q \leq cx^{c}$ for a sufficiently small constant $c>0$, then $$\psi(x,a,q):=\sum_{\substack{p\in \P\cap[1,x] \\ p \equiv a \textnormal{ mod }q}} \log p \gg \frac{x}{\phi(q)\sqrt{q}},$$ where $\phi$ is the Euler totient function. 
\end{lemma}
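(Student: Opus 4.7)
The plan follows the classical route to Linnik's theorem via Dirichlet $L$-functions. First I would use orthogonality of characters to write
$$\psi(x,a,q) = \frac{1}{\phi(q)}\sum_{\chi \bmod q} \bar\chi(a)\,\psi(x,\chi), \qquad \psi(x,\chi) = \sum_{n \leq x} \chi(n)\Lambda(n),$$
so that the principal character supplies the main term $x/\phi(q)$ (modulo a negligible contribution from prime powers and ramified primes), while each nonprincipal character is governed by the zeros of $L(s,\chi)$ in the critical strip through the explicit formula
$$\psi(x,\chi) = \delta(\chi)\,x - \sum_{|\mathrm{Im}(\rho)|\leq T} \frac{x^{\rho}}{\rho} + O\!\left(\frac{x\log^2(qx)}{T}\right),$$
with $\delta(\chi)=1$ for the principal character and $0$ otherwise.

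The crux is bounding the combined zero contribution $\sum_\chi \bar\chi(a)\sum_\rho x^\rho/\rho$, which I would split into two classes. The generic zeros are controlled by a log-free zero density estimate of Linnik type,
$$\sum_{\chi \bmod q} N(\alpha,T,\chi) \ll (qT)^{A\alpha}$$
for an absolute constant $A$, combined with the classical zero-free region $\beta \leq 1 - c_0/\log(qT)$. A dyadic decomposition in $\alpha$, with $T$ chosen close to $x$, shows that provided $q\leq cx^c$ with $c$ sufficiently small, the total contribution from these zeros is a small fraction of the main term $x/\phi(q)$.

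The only serious obstruction is a possible Siegel zero, namely a single real zero $\beta_1$ of $L(s,\chi_1)$ for some real primitive character $\chi_1$ modulo a divisor of $q$, which contributes $-\bar\chi_1(a)x^{\beta_1}/\beta_1$ and can nearly cancel the main term. The decisive unconditional input here is Page's theorem, which gives $1-\beta_1 \gg (\sqrt{q}\,\log^2 q)^{-1}$; combined with $x \geq q^{1/c}$, this yields $x-x^{\beta_1} \gg x/\sqrt{q}$. The Deuring--Heilbronn repulsion phenomenon simultaneously pushes all other zeros further from the line $\mathrm{Re}(s)=1$ whenever such a Siegel zero exists, preserving the generic-zero estimate in that scenario. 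Assembling these pieces produces
$$\psi(x,a,q) \geq \frac{x-x^{\beta_1}}{\phi(q)} - o\!\left(\frac{x}{\phi(q)}\right) \gg \frac{x}{\phi(q)\sqrt{q}}.$$
The main obstacle is establishing the log-free zero density estimate, whose standard proof requires a carefully chosen mollifier to remove the $\log$ factor intrinsic to the basic Tur\'an power-sum approach; the complete argument is carried out as Corollary 18.8 of \cite{IK}.
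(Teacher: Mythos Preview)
The paper does not supply its own proof of this lemma; it is quoted as Corollary 18.8 of \cite{IK} and used as a black box in the proof of Lemma \ref{itn2}. Your outline is the standard route to Linnik's theorem---orthogonality, explicit formula, log-free zero density, and the Deuring--Heilbronn phenomenon to handle a possible exceptional zero---and is in essence the argument carried out in \cite{IK}, so your proposal matches what the paper invokes.

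One slip worth flagging: your log-free zero density estimate is misstated. You wrote
\[
\sum_{\chi \bmod q} N(\alpha,T,\chi) \ll (qT)^{A\alpha},
\]
which \emph{grows} as $\alpha\to 1$; the correct form is $\ll (qT)^{A(1-\alpha)}$, decaying as $\alpha\to 1$. The dyadic-in-$\alpha$ argument you describe only works with the latter exponent, so this is presumably a typo rather than a conceptual error. A minor second point: Page's bound $1-\beta_1 \gg (\sqrt{q}\,\log^2 q)^{-1}$ together with $\log x \asymp c^{-1}\log q$ yields $x-x^{\beta_1}\gg x/(\sqrt{q}\,\log q)$ rather than $x/\sqrt{q}$; the stray logarithm does not affect the shape of the conclusion but should be tracked if you want to match the stated bound exactly.
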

\noindent The requisite tools are now in place for us to proceed with proving Theorems \ref{modelI}, \ref{main1}, and \ref{main2}.

\section{Intersective Polynomials}

\subsection{The Model Case} \label{modelS} In this section, we prove the simplest nontrivial extension of the initial trivial observation made in the introduction, which exposes  the fundamental ideas of our general argument. 
\begin{proof}[Proof of Theorem \ref{modelI}] Suppose $A=\{x_1d_1+x_2d_2 : |x_1|\leq L_1, \ |x_2|\leq L_2 \}\subseteq [-N,N]$ is proper, $L_2d_2\geq L_1d_1$,  and $d_2 \in \P$. Let $n=(L_2d_2/4)^{1/2}$.
\noindent Suppose  further that $A$ contains no nonzero squares, which crucially means that no sufficiently small square is congruent to any sufficiently small multiple of $d_1$ modulo $d_2$.

\noindent Let $A_1=\{xd_1 : |x|\leq L_1/4\}\subseteq \Z_{d_2}$. We note that, by properness and the fact that $L_1d_1\leq L_2d_2$, all of these multiples of $d_1$ reduce to distinct residues modulo $d_2$, and in particular $|A_1|\gg L_1$. We define $f$ as a function on $\Z_{d_2}$ by $$f(x)=|A_1|^{-1}1_{A_1}*1_{A_1}(x).$$ Clearly $f$ is a nonnegative function supported on $\{xd_1 : |x|\leq L_1/2\}\subseteq \Z_{d_2}$ and $f(x)\leq f(0) = 1$. The symmetry of $A_1$ makes $\widehat{1_{A_1}}$ a real-valued function, and by (\ref{conv}) we know that $\widehat{f}(t)=|A_1|^{-1}\widehat{1_{A_1}}(t)^2$, therefore \[0\leq \widehat{f}(t) \leq \widehat{f}(0)=|A_1|.\]

\noindent We note that if $f(m^2)>0$ for some $1\leq m \leq n$, then there exists $x$ with $|x|\leq L_1/2$ and $xd_1\equiv m^2 \mod d_2$. In other words, $xd_1-m^2=\ell d_2$ and $$|\ell|\leq (L_1d_1/2+m^2)/d_2\leq (L_2d_2/2+L_2d_2/2)/d_2=L_2,$$ hence $m^2 =xd_1-\ell d_2\in A$. 
\noindent Therefore, since we assumed $A$ contained no nonzero squares, it follows from the orthogonality relation (\ref{orth}) that 
\begin{equation*} d_2\sum_{1\leq m \leq n}f(m^2)=\sum_{t\in \Z_{d_2}}\widehat{f}(t)W(t)=0,
\end{equation*} 
where $$W(t)=\sum_{m=1}^n e^{2\pi i m^2t/d_2}.$$ Combined with the positivity of $\widehat{f}$, this yields 
\begin{equation} \label{sq1} \sum_{t\neq 0} \widehat{f}(t)|W(t)| \geq \widehat{f}(0)W(0) \gg L_1(L_2d_2)^{1/2}.
\end{equation} 
\noindent  Since $d_2\in \P$ and $n\leq d_2$, as otherwise $d_2^2\in A$,  we have from Lemma \ref{gauss} that 
\begin{equation*} |W(t)| \ll (d_2 \log d_2)^{1/2} \text{ for all } t\neq 0,
\end{equation*}
which by the Fourier inversion formula (\ref{inv}) and the fact that $f(0)=1$ gives
\begin{equation}\label{sq2} \sum_{t\neq 0} \widehat{f}(t)|W(t)| \ll (d_2 \log d_2)^{1/2} \sum_{t\in \Z_{d_2}} \widehat{f}(t) = (d_2 \log d_2)^{1/2} d_2f(0) \leq d_2(d_2 \log N)^{1/2}.
\end{equation}
\noindent Combining (\ref{sq1}) and (\ref{sq2}), we have $$L_1(L_2d_2)^{1/2} \ll d_2(d_2 \log N)^{1/2},$$ 
which can be manipulated to yield 
$$|A|\ll L_1L_2 \ll \frac{L_2d_2}{L_2^{1/2}}(\log N)^{1/2}\leq \frac{N}{L_2^{1/2}}(\log N)^{1/2}.$$
Further, we know from our discussion of the one dimensional case that $L_1\ll N^{1/2}$, and hence $$|A|\ll \min \{N^{1/2}L_2, \frac{N}{L_2^{1/2}}(\log N)^{1/2}\}. $$ The quantity on the right hand side is maximized when $L_2=(N\log N)^{1/3}$, and the theorem follows.
\end{proof} 

\subsection{The General Case} In this section we prove Theorem \ref{main1} in a more flexible form, and we begin by making part of this flexibility precise with a definition.  
\begin{definition}For $M>0$, we say that a GAP $A$ is $M$-\textit{proper} if each element of $A$ is represented in at most $M$ ways. In the case of a symmetric progression $A=\{x_1d_1+\cdots+x_kd_k : |x_i|\leq L_i\}$, this means $$\Big|\Big\{(x_1,\dots, x_k)\in \Z^k: |x_i|\leq L_i, \ x_1d_1+\cdots+x_kd_k=x\Big\}\Big| \leq M \text{ for all } x\in \Z.$$ 
\end{definition}

\noindent  We now deduce Theorem \ref{main1} from the following.
\begin{theorem} \label{M1} Suppose $A=\{x_1d_1 + \cdots + x_kd_k : \ |x_i|\leq L_i \} \subseteq [-N,N]$ is an $M$-proper, symmetric GAP with $L_1 \leq \cdots \leq L_k$, and suppose $h\in \Z[x]$ is a polynomial of degree $\ell$ which has a root modulo $\gcd(d_i,\dots,d_k)$ for all $1\leq i \leq k$. If $A\cap h(\Z) \subseteq \{0\}$, then $$\prod_{i=1}^k L_i\ll_h  N^{1-\big(\ell(2^{\ell+2}+1)^{k-1}\big)^{-1}}(4k)^kM\log N.$$ 
\end{theorem}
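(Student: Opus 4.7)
The plan is to proceed by induction on $k$, emulating the Fourier argument of Theorem \ref{modelI}. For the base case $k = 1$, intersectivity provides $r$ with $d_1 \mid h(r)$, so every $m \equiv r \pmod{d_1}$ satisfies $d_1 \mid h(m)$. If $1 \leq |m| \leq C_h(L_1 d_1)^{1/\ell}$ and $h(m) \neq 0$, then $h(m)/d_1 \in [-L_1, L_1] \setminus \{0\}$, forcing $h(m) \in A \setminus \{0\}$, contrary to hypothesis. Since $h$ has at most $\ell$ roots, this caps the number of admissible $m \equiv r \pmod{d_1}$ in the range, yielding $L_1^{1/\ell} \ll_h d_1^{(\ell-1)/\ell}$; combined with $L_1 d_1 \leq N$ this gives $L_1 \ll_h N^{1-1/\ell}$, matching the theorem at $k=1$.

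For the inductive step, reserve the largest step $d_k$ and work on $\Z_{d_k}$. Set $c_k = 4k$ and form the sub-GAP
\[
A_0 = \{x_1 d_1 + \cdots + x_{k-1} d_{k-1} : |x_i| \leq L_i/c_k\},
\]
viewed as a multiset in $\Z_{d_k}$; the $M$-proper hypothesis on $A$ controls the multiplicities, so the function $f = |A_0|^{-1} 1_{A_0} \ast 1_{A_0}$ (convolution in $\Z_{d_k}$) is nonnegative with nonnegative Fourier transform, and satisfies $f(0) \leq M$ and $\widehat{f}(0) = |A_0| \gg \prod_{i<k} L_i/(4k)^{k-1}$. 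Intersectivity modulo $d_k$ provides $r_0$ with $d_k \mid h(r_0)$; set $n = \lfloor(L_k d_k)^{1/\ell}/C_h\rfloor$. The choice $c_k = 4k$ ensures that for $m = r_0 + jd_k$ with $1 \leq j \leq n$, the condition $f(h(m)) > 0$ combined with the size bound $|h(m)| \leq L_k d_k$ forces $h(m) \in A$, hence $h(m) = 0$ by hypothesis, which occurs for at most $\ell$ values of $j$. Fourier inversion on $\Z_{d_k}$, together with positivity of $\widehat{f}$, then gives
\[
\widehat{f}(0)\, n \;\ll\; \sum_{t \neq 0} \widehat{f}(t)\,|W(t)| \;+\; O_h(d_k),
\qquad W(t) = \sum_{j=1}^n e^{2\pi i h(r_0 + jd_k) t / d_k}.
\]

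Estimating the right-hand side is the core of the argument. For each $t \neq 0$, apply Dirichlet's approximation theorem to $t/d_k$ to obtain a reduced fraction $t'/d'$ with $d' \leq n^{\ell/2}$. When $d'$ lies in the Weyl regime, Lemma \ref{weylI} yields $|W(t)| \ll_h n^{1 - 2^{-\ell}}(\log N)^{O_\ell(1)}$, and this contribution is absorbed via $\sum_t \widehat{f}(t) \leq d_k f(0) \leq Md_k$. The remaining ``major-arc'' $t$, where $t/d_k$ is unusually close to a low-denominator rational, are handled by the structural observation that $\widehat{f}(t) = |A_0|^{-1}\prod_{i<k}|D_{L_i/(4k)}(d_i t/d_k)|^2$ is a product of Dirichlet kernels, so the mass of $\widehat{f}$ on such $t$ corresponds to a genuine $(k-1)$-dimensional sub-GAP $A' \subseteq A$ that inherits $A' \cap \tilde h(\Z) \subseteq \{0\}$ for an auxiliary intersective polynomial $\tilde h$ of degree $\ell$ at the new moduli; the inductive hypothesis handles $A'$.

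Balancing the two contributions and solving for $\prod L_i$ produces the stated bound: each induction step multiplies the denominator of the $N$-exponent by $2^{\ell+2} + 1$, arising from balancing the $2^{-\ell}$ Weyl savings against the major-arc recursion, and inflates the constant by a factor of $4k$, accounting for the final $(4k)^k M \log N$. The principal obstacle is executing the major-arc reduction cleanly: one must extract the $(k-1)$-dimensional sub-GAP $A'$ from the concentration of $\widehat{f}$ on low-denominator approximations, identify the auxiliary polynomial $\tilde h$ and verify that it has roots at the required $\gcd$s of the moduli of $A'$, and control how the $M$-properness constant transforms, so that the inductive estimate and the Weyl bound aggregate to produce exactly the claimed exponent and constant.
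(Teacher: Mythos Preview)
Your outline diverges from the paper's argument precisely at the point you flag as ``the principal obstacle,'' and that obstacle is real: the proposed major-arc reduction to a $(k-1)$-dimensional sub-GAP $A'$ with an auxiliary polynomial $\tilde h$ is not carried out, and it is not clear how to carry it out. Concentration of $\widehat{f}$ near a rational $a/q$ with small $q$ does not naturally hand you a $(k-1)$-dimensional symmetric GAP inside $A$ on which the hypothesis $A'\cap\tilde h(\Z)\subseteq\{0\}$ holds for some $\tilde h$ with the correct root conditions at the new moduli; you would need to specify $A'$, $\tilde h$, and the $M$-properness constant explicitly, and verify the exponent bookkeeping, none of which is done. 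As written, the inductive step does not close.

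The paper avoids this entirely by separating the Fourier argument from the dimension reduction. First, one proves a non-inductive lemma (Lemma~\ref{itn}): divide through by $q=(D_1,\dots,D_k)$ so that the reduced differences satisfy $(d_1,\dots,d_k)=1$, replace $h$ by $h_q(x)=h(r+qx)/q$, and run the Fourier argument on $\Z_{d_k}$ with the product $f_1*\cdots*f_{k-1}$. On the minor arcs one uses Weyl as you do. On the major arcs, where $(t,d_k)=D>d_k/X$, the coprimality $(d_1,\dots,d_k)=1$ guarantees some $j$ with $d_k/D\nmid d_j$, hence $\widehat{f_j}(t)\leq X^2/|A_j|$; this is a direct pointwise bound, with no recursion and no auxiliary polynomial. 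The conclusion is that if $\min_i L_i$ exceeds $C\big(M(4k)^k\delta^{-1}\log N\big)^{2^{\ell+2}}$ then $A$ contains a nonzero value of $h$. Second, the iteration that produces the exponent $\big(\ell(2^{\ell+2}+1)^{k-1}\big)^{-1}$ is purely combinatorial: since $A\cap h(\Z)\subseteq\{0\}$, the lemma forces $L_1$ (the smallest side) to be small, one deletes the first coordinate, updates $\delta$, and repeats down to $k=1$, where your base-case argument applies.

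In short, the factor $2^{\ell+2}+1$ arises in the paper from peeling off the smallest $L_i$ after a single application of the Fourier lemma, not from balancing a Weyl bound against a major-arc recursion inside the Fourier argument. Your scheme might be salvageable, but the missing major-arc step is exactly the heart of the matter, and the paper's coprimality trick plus external iteration is both simpler and complete.
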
 

\begin{proof}[Proof that Theorem \ref{M1} implies Theorem \ref{main1}]
Suppose $A=\{x_1d_1 + \cdots + x_kd_k : \ |x_i|\leq L_i \} \subseteq [-N,N]$ and let $$M=\max_{x\in \Z}\Big|\Big\{(x_1,\dots, x_k)\in \Z^k: |x_i|\leq L_i, \ x_1d_1+\cdots+x_kd_k=x\Big\}\Big| $$ Fixing an $x\in \Z$ at which this maximum is attained, we find by taking differences of the $M$ representations of $x$ that there are at least $M$ representations 
\begin{align*} y_1^{(1)}+&\cdots+y_k^{(1)}=0 \\ y_1^{(2)}+&\cdots+y_k^{(2)}=0 \\ &\cdots \\ y_1^{(m)}+&\cdots+y_k^{(m)}=0
\end{align*} with $|y_i|\leq 2L_i$. In particular, there are at least $M$ representations of every element of $A$ inside the larger GAP $\{x_1d_1+\cdots+x_kd_k : |x_i|\leq 3L_i\}$, and hence $$M|A| \leq \prod_{i=1}^k (6L_i+1).$$ Theorem \ref{main1} then follows immediately from Theorem \ref{M1}
\end{proof}

\noindent We deduce Theorem \ref{M1} from the following lemma, which states that, provided there are no obvious local obstructions, a GAP contains a nonzero element in the image of a given polynomial as long as it is sufficiently ``wide" in each direction. Theorem \ref{M1} then follows from an iteration that is responsible for the doubly exponential dependence on the dimension $k$ in the final result.
 
\begin{lemma} \label{itn} Suppose $A=\{x_1d_1 + \cdots + x_kd_k : \ |x_i|\leq L_i \} \subseteq [-N,N]$ is an $M$-proper, symmetric GAP with $L_1\cdots L_k=\delta N$, and suppose $h\in \Z[x]$ is a polynomial of degree $\ell$ which has a root modulo $(d_1,\dots,d_k)$. If
\begin{equation}\label{fat1} \min\{L_i\}_{i=1}^k \geq C\Big(M(4k)^{k}\delta^{-1}\log N\Big)^{2^{\ell+2}} \end{equation} 
for a sufficiently large constant $C$ depending only on $h$, then $A$ contains a nonzero element of $h(\Z)$.
\end{lemma}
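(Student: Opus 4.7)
The plan is to argue by contradiction, extending the Fourier-analytic argument behind Theorem \ref{modelI}. Let $D=\gcd(d_1,\ldots,d_k)$ and choose $r\in\Z$ with $h(r)\equiv 0\pmod D$ (which exists by hypothesis). Writing $d_i=De_i$ one has $\gcd(e_1,\ldots,e_k)=1$ and $A=D\cdot A^{\ast}$ where $A^{\ast}=\{x_1e_1+\cdots+x_ke_k:|x_i|\leq L_i\}$, and the polynomial $g(m):=h(r+Dm)/D\in\Z[m]$ has degree $\ell$ and leading coefficient $a_\ell D^{\ell-1}$. The task becomes finding $m\in\N$ with $g(m)\neq 0$ and $g(m)\in A^{\ast}$.

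Relabel so that $L_ke_k=\max_iL_ie_i$ and work modulo $q:=e_k$. Define
\[B:=\Set{\textstyle\sum_{i<k}x_ie_i\bmod q : |x_i|\leq L_i/(4k)}\subseteq \Z_q,\]
which is symmetric, and note that $M$-properness passes from $A$ to $A^{\ast}$ and yields $|B|\gg L_1\cdots L_{k-1}/(M(4k)^{k-1})$. Set $f:=|B|^{-1}1_B\ast 1_B$, so that $f\geq 0$, $f(0)=1$, $\widehat{f}=|B|^{-1}|\widehat{1_B}|^2\geq 0$, and $\widehat{f}(0)=|B|$. Let $n_0$ be of order $(L_kd_k)^{1/\ell}/D$ (with implicit constants depending on $h$), chosen so that $|g(m)|\leq L_ke_k/2$ for all $1\leq m\leq n_0$. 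The key lifting fact is that if $f(g(m))>0$ then $g(m)\equiv\sum_{i<k}x_ie_i\pmod{e_k}$ for some $|x_i|\leq L_i/(2k)$, whence $g(m)=\sum_{i<k}x_ie_i+ue_k$ with $|u|\leq L_k$ (combining $|g(m)|\leq L_ke_k/2$ and $\sum|x_i|e_i\leq L_ke_k/2$), placing $g(m)\in A^{\ast}$. By assumption $g(m)=0$ in this case; since $g$ has at most $\ell$ zeros, one obtains $\sum_{m=1}^{n_0}f(g(m))\leq\ell$.

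On the other hand, Fourier inversion gives
\[\sum_{m=1}^{n_0}f(g(m))=\frac{1}{q}\sum_{t\in\Z_q}\widehat{f}(t)W(t),\qquad W(t):=\sum_{m=1}^{n_0}e^{2\pi i t g(m)/q},\]
whose $t=0$ term contributes the positive main quantity $q^{-1}|B|n_0$. Comparing with the bound $\leq\ell$ forces $\sum_{t\neq 0}\widehat{f}(t)|W(t)|\gg |B|n_0$. I would rule this out by partitioning $t\neq 0$ according to the reduced denominator $d'$ of $t/q$: on the ``minor arcs'' where $d'$ lies in the window permitted by Lemma \ref{weylI}, applying that lemma to the integer polynomial $g$ produces $|W(t)|\ll_\ell n_0(\log n_0)^{O(1)}n_0^{-2^{-\ell}}$, and one sums using $\sum_t\widehat{f}(t)=q$; on the complementary ``major arcs'' (very small $d'$, or $d'$ very close to $q$) one absorbs the contribution via $\widehat{f}(t)\leq|B|$ and a direct count of such $t$. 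Balancing the two sides and unwinding $n_0$, $|B|$, and $\delta=\prod_iL_i/N$, using $\min_iL_i\leq L_k$, produces $\min_iL_i\leq C_h(M(4k)^k\delta^{-1}\log N)^{2^{\ell+2}}$, contradicting \eqref{fat1} for $C$ large enough.

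The main technical obstacle is the major-arc bookkeeping. Lemma \ref{weylI}'s hypothesis $|\alpha-t'/d'|<d'^{-2}$ restricts the useful denominators $d'$ to a window whose endpoints depend on $n_0$ and on the leading coefficient $a_\ell D^{\ell-1}$ of $g$, and preserving the target exponent $2^{\ell+2}$ (so that the iteration in Theorem \ref{M1} retains its $(2^{\ell+2}+1)^{k-1}$ numerology) demands careful control of the $t$ with extremal denominators. I expect this to require a Parseval-type second-moment estimate for $\widehat{f}$ in place of the crude $\widehat{f}(t)\leq|B|$, together with careful use of $M$-properness to rule out excessive Fourier concentration of $B$.
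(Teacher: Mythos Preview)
Your overall framework---divide out the common factor, pass to $g(m)=h(r+Dm)/D$, run Fourier analysis modulo the ``longest'' direction $e_k$, and split into minor and major arcs---is exactly the paper's. The minor-arc portion is fine. The gap is in the major-arc step, and it is not merely bookkeeping.

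By taking $f=|B|^{-1}1_B*1_B$ with $B$ the \emph{set} of residues $\{\sum_{i<k}x_ie_i \bmod e_k:|x_i|\le L_i/4k\}$, you collapse the $(k-1)$-fold structure into a single object whose Fourier transform you cannot usefully control at the exceptional frequencies. Neither of your proposed remedies works: the crude bound $\widehat f(t)\le|B|$ together with the count of $O(X^2)$ frequencies with $e_k/(t,e_k)<X$ gives a major-arc contribution of order $X^2|B|n_0$, which swamps the main term $|B|n_0$; and Parseval only yields $\sum_t\widehat f(t)=e_k f(0)=e_k$, which is precisely the input already used on the minor arcs and provides no further saving when restricted to major arcs. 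There is no second-moment or $M$-properness argument that forces $\widehat{1_B}(t)$ itself to be small at a specific frequency of small period, because $1_B$ has forgotten how its elements were built from the $e_i$.

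The paper avoids this by \emph{not} collapsing: for each $i<k$ it sets $g_i(x)=\sum_{|y|\le L_i/4k}1_{ye_i}(x)$ (with multiplicity) and $f_i=|A_i|^{-1}g_i*g_i$, and then works with the product $\widehat{f_1}(t)\cdots\widehat{f_{k-1}}(t)$. Each factor is an explicit squared geometric series. If $t\neq 0$ has reduced denominator $d'=e_k/(t,e_k)<X$, then since $\gcd(e_1,\dots,e_k)=1$ there is some $j<k$ with $d'\nmid e_j$, and for that single index one gets
\[
\widehat{f_j}(t)=|A_j|^{-1}\Bigl|\sum_{|x|\le L_j/4k}e^{2\pi i x e_j t/e_k}\Bigr|^2\le |A_j|^{-1}\norm{e_jt/e_k}^{-2}\le X^2/|A_j|.
\]
Bounding the remaining factors trivially and counting the $O(X)$ relevant divisors and $O(X)$ frequencies per divisor gives a major-arc total $\ll n_0\bigl(\prod_{i<k}|A_i|\bigr)\,X^4/\min_i|A_i|^2$, and it is this saving of $\min_i|A_i|^{-2}$, balanced against the minor-arc Weyl bound, that produces the exponent $2^{\ell+2}$ in~\eqref{fat1}. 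Your set-valued $f$ has no individual factor to peel off, so this mechanism is unavailable; the missing idea is to replace $1_B$ by the multiplicity-weighted object $g_1*\cdots*g_{k-1}$, i.e.\ to keep the $f_i$ separate.
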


\subsubsection{Proof of Theorem \ref{M1}} Fix an $M$-proper GAP 
\[A_0=\{x_1d_1 + \cdots + x_kd_k : \ |x_i|\leq L_i \}\subseteq [-N,N]\] 
with $L_1\cdots L_k=\delta_0N$, ordered such that $L_1\leq \cdots \leq L_k$, and fix a polynomial $h \in \Z[x]$ of degree $\ell$ such that $h$ has a root modulo $(d_i,\dots, d_k)$ for all $1\leq i \leq k$ and $A\cap h(\Z) \subseteq \{0\}$. 
By Lemma \ref{itn}, we know \[L_1\leq C\Big(M(4k)^{k}\delta_0^{-1}\log N\Big)^{2^{\ell+2}},\] 
and removing the first dimension yields \[A_1=\{x_2d_2 + \cdots + x_kd_k : \ |x_i|\leq L_i \}\] 
with $L_2\cdots L_k=\delta_1N$ and \[\delta_1=\delta_0/L_1 \geq C\Big(M(4k)^{k}\delta_0^{-1}\log N\Big)^{-(2^{\ell+2}+1)}.\] 
Iterating this process yields GAPs \[A_j=\{x_{j+1}d_{j+1} + \cdots + x_kd_k : \ |x_i|\leq L_i \}\] with $L_{j+1}\cdots L_k=\delta_j N$ and \[\delta_j \geq \Big( C M(4k)^{k}\delta_0^{-1}\log N\Big)^{-(2^{\ell+2}+1)^j}\] for $1\leq j \leq  k-1$. Further, if  $r \in [0,d_k)$ is a root of $h$ modulo $d_k$, then one of \[\{h(r), h(r+d_k), \dots, h(r+(\ell+1)d_k)\},\] call it $h(n)$, is nonzero, and hence $h(n)\notin A_k$. Therefore, since $d_k \mid h(n)$, it must be the case that $$L_kd_k \leq |h(n)| \ll_h d_k^{\ell},$$ and hence $$(\delta_{k-1} N)^{1+1/(\ell-1)} = L_k^{1+1/(\ell-1)}\ll_h L_kd_k \leq N.$$ This implies that $\delta_{k-1} \ll _h N^{-1/\ell}$, hence $$\Big( C M(4k)^{k}\delta_0^{-1}\log N\Big)^{(2^{\ell+2}+1)^{k-1}}\geq N^{1/\ell},$$ and the theorem follows.
\qed

\subsubsection{Proof of Lemma \ref{itn}} Fix an $M$-proper GAP \[A=\{x_1D_1 + \cdots + x_kD_k : \ |x_i|\leq L_i \}\subseteq[-N,N],\] let $\delta=L_1\cdots L_k/N$, let $q=(D_1,\dots, D_k)$, and fix a polynomial $h\in\Z[x]$ of degree $\ell$ with a root modulo $q$, which by symmetry we can assume has positive leading coefficient. To show that $A$ contains a nonzero element in the image of $h$, it suffices to show that \[A'=\{x_1d_1 + \cdots + x_kd_k : \ |x_i|\leq L_i \}\] contains an nonzero element in the image of $h_q(x)=h(r+qx)/q\in \Z[x]$, where $r\in[0,d)$ is a root of $h$ modulo $q$ and $d_i=D_i/q$. 
\noindent Let \[S=\{m\in \N : 0<h_q(m)<L_kd_k/2\}\] and $n=\Big(\dfrac{L_kd_k}{2q^{\ell-1}b}\Big)^{1/\ell}$, where $b$ is the leading coefficient of $h$, noting that 
\begin{equation}\label{symdif}\Big| S \triangle [1,n]\Big| \ll_h 1.\end{equation}

\noindent For $1\leq i \leq k-1$, assume without loss of generality that $L_id_i \leq L_kd_k$, let \[A_i=\{xd_i: |x|\leq L_i/4k\}\subseteq \Z,\] and define $g_i,f_i$ as functions on $\Z_{d_k}$ by $$g_i(x)=\sum_{|y|\leq L_i/4k} 1_{yd_i}(x) \quad \text{and} \quad f_i(x)=|A_i|^{-1}g_i*g_i(x).$$ 
As in the model case, $f_i$ is supported on $\{xd_i: |x|\leq L_i/2k\}\subseteq \Z_{d_k}$ and \[0\leq \widehat{f_i}(t)\leq \widehat{f_i}(0)=|A_i|.\] 
Letting $M_i=\max(f_i)$, we see that $\prod_{i=1}^{k-1}M_i\leq M$ by $M$-properness, and  $f_1*\cdots*f_{k-1}(0)$ is bounded above by \begin{equation}\label{Mprop} \Big|\Big\{(x_1d_1,\dots,x_{k-1}d_{k-1})\in \Z_{d_k}^{k-1} \ : \ |x_i|\leq L_i/2k, \ x_1d_1+\cdots+x_kd_k \equiv 0 \mod d_k\Big\}\Big| \prod_{i=1}^{k-1}M_i \leq M^2.\end{equation}  Further, if $f_1*\cdots*f_{k-1}(h_q(m))>0$ for some $m \in S$, then $h_q(m) \in A'$, so we need only show
\begin{equation*}d_k\sum_{m\in S} f_1*\cdots*f_{k-1}(h_q(m))=\sum_{t\in \Z_{d_k}}\widehat{f_1}(t)\cdots\widehat{f_{k-1}}(t)W(t)>0,
\end{equation*}
where the equality follows from (\ref{orth}) and $$W(t)=\sum_{m\in S}e^{2\pi i h_q(m)t/d_k},$$ 
for which it suffices to show
\begin{equation}\label{suff} \sum_{t\neq 0}\widehat{f_1}(t)\cdots\widehat{f_{k-1}}(t)|W(t)| < \widehat{f_1}(0)\cdots\widehat{f_{k-1}}(0)W(0)=(n+O_h(1))\prod_{i=1}^{k-1}|A_i|,
\end{equation} where the estimate on $W(0)$ follows from (\ref{symdif}). Noting that the leading coefficient of $h_q$ is at most $q^{\ell}$ times the leading coefficient of $h$, and that $q\leq \delta^{-1}$ by definition, we have from Lemma \ref{weylI} and (\ref{symdif})  that if $(t,d_k)=D$,  then \begin{equation}\label{gauss2} |W(t)| \ll_h  n\Big(\delta^{-\ell}\log^{\ell^2} N(D/d_k+1/n+1/DL_k)\Big)^{2^{-\ell}}.
\end{equation}  We also have by (\ref{Mprop}) and the Fourier inversion formula that 
\begin{equation}\label{finv} \sum_{t\in\Z_{d_k}} \widehat{f_1}(t)\cdots\widehat{f_{k-1}}(t)= d_k f_1*\cdots*f_{k-1}(0)\leq M^2d_k. \end{equation}
Further, we know that  
\begin{equation}\label{alphaB} d_k \leq N/L_k = \Big(\prod_{i=1}^kL_i\Big)/\delta L_k\leq \Big(\prod_{i=1}^{k-1}|A_i|\Big)(4k)^k/\delta. 
\end{equation}
By (\ref{gauss2}), (\ref{finv}), and (\ref{alphaB}), we see that for any $X>0$ 
\begin{equation} \label{A2} \sum_{(t,d_k) \leq d_k/X} \widehat{f_1}(t)\cdots\widehat{f_{k-1}}(t)|W(t)| \ll_h n\Big(\prod_{i=1}^{k-1}|A_i|\Big) \Big(\delta^{-\ell}\log^{\ell^2} N(1/X+1/n+1/L_k)\Big)^{2^{-\ell}}M^2(4k)^k/\delta
\end{equation} We see that if $(t,d_k)=D>d_k/X$, and $d_k/(t,d_k) \nmid d_j$, then
\begin{equation*}\widehat{f_j}(t) = |A_j|^{-1}\Big| \sum_{|x|\leq L_j/4k} e^{2\pi i xd_j t/d_k} \Big|^2 \leq |A_j|^{-1} \norm{d_j t/d_k}^{-2}\leq X^2/|A_j|,
\end{equation*}
where $\norm{\cdot}$ denotes the distance to the nearest integer. Therefore, \begin{equation}\label{canc}\sum_{(t,d_k) =D}\widehat{f_1}(t)\cdots\widehat{f_{k-1}}(t) \leq X^2|A_j|^{-1} \sum_{D \mid t} \prod_{i\neq j}\widehat{f_i}(t)\leq X^3\Big(\prod_{i=1}^{k-1}|A_i|\Big)/|A_j|^2.
\end{equation}
There are trivially at most $X$ divisors of $d_k$ satisfying $D>d_k/X$, and since $(d_1,\dots,d_k)=1$, (\ref{canc}) implies
\begin{equation}\label{A3} \sum_{(t,d_k) > d_k/X} \widehat{f_1}(t)\cdots\widehat{f_{k-1}}(t)|W(t)| \ll_h n\Big(\prod_{i=1}^{k-1}|A_i|\Big) X^4/ \min\{|A_i|\}^2.
\end{equation}
Setting $X= CM^{2^{\ell+1}}(4k)^{2^{\ell}k}(\log N)^{\ell^2}\delta^{-(2^{\ell}+\ell)}$ for a sufficiently large constant $C$ depending on $h$, we see that if $A$ satisfies (\ref{fat1}), then $$\min\{|A_i|\}_{i=1}^{k-1} \geq \min\{L_i\}_{i=1}^{k-1}/2k \geq C'X^{2}$$ for a sufficiently large constant $C'$, provided the constant in (\ref{fat1}) is sufficiently large with respect to the constant defining $X$. Combining (\ref{A2}) and (\ref{A3}), we see that the left-hand side of (\ref{suff}) is dominated by a small constant times the right-hand side of (\ref{suff}), and the lemma follows. \qed
 
\section{$\P$-intersective Polynomials} Analogous to Section 3, we deduce Theorem \ref{main2} from the following result.
\begin{theorem}\label{M2}  Suppose $A=\{x_1d_1 + \cdots + x_kd_k : \ |x_i|\leq L_i \} \subseteq [-N,N]$ is an $M$-proper, symmetric GAP with $L_1\leq L_2 \leq \cdots \leq L_k$, and suppose $h\in \Z[x]$ is a polynomial of degree $\ell$ which has a root modulo $(d_i,\dots,d_k)$ that is coprime to $(d_i,\dots, d_k)$ for all $1\leq i \leq k$. If $A\cap h(\P)\subseteq\{0\}$, then $$\prod_{i=1}^k L_i\ll N^{1-c\ell^{-1}(200\ell^24^{\ell})^{1-k}} (4k)^k M\log N $$ for an absolute constant $c>0$, where the implied constant depends only on $h$.
\end{theorem}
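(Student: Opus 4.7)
The plan is to mirror the structure of Section~3: deduce Theorem~\ref{M2} from an analog of Lemma~\ref{itn} for primes by an identical dimension-by-dimension iteration. Precisely, I would establish a width lemma asserting that if $A=\{x_1d_1+\cdots+x_kd_k:|x_i|\leq L_i\}\subseteq[-N,N]$ is $M$-proper with $L_1\cdots L_k=\delta N$, and $h\in\Z[x]$ of degree $\ell$ has a coprime root modulo $q=(d_1,\ldots,d_k)$, then
\[
\min_i L_i \;\geq\; C\bigl(M(4k)^k\delta^{-1}\log N\bigr)^{200\ell^2 4^\ell}
\]
forces $A$ to contain a nonzero element of $h(\P)$. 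Iterating this width lemma as in the derivation of Theorem~\ref{M1} from Lemma~\ref{itn} yields Theorem~\ref{M2}, with one modification: the final bound on $L_k$ comes from Linnik's theorem (Lemma~\ref{qlin}), which supplies a prime $p\equiv r\pmod{d_k}$ with $p\ll d_k^{C}$ for an absolute constant $C$, hence $L_kd_k\leq|h(p)|\ll_h d_k^{C\ell}$.

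To prove the width lemma I would first perform the standard reduction to coprime common differences (replacing $h$ by $h_q(x)=h(r+qx)/q$ where $r$ is a coprime root modulo $q$), so the problem becomes finding $m$ with $qm+r\in\P$ and $h_q(m)\in A'\setminus\{0\}$. Using the kernels $f_i=|A_i|^{-1}g_i*g_i$ on $\Z_{d_k}$ exactly as in Lemma~\ref{itn}, Fourier inversion converts the counting sum into
\[
\sum_{\substack{m\leq n\\ qm+r\in\P}}\log(qm+r)\,f_1*\cdots*f_{k-1}(h_q(m)) \;=\; d_k^{-1}\sum_{t\in\Z_{d_k}}\widehat{f_1}(t)\cdots\widehat{f_{k-1}}(t)\,W(t),
\]
where $W(t)=\sum_{m\leq n,\,qm+r\in\P}\log(qm+r)\,e^{2\pi i h_q(m)t/d_k}$. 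The main term at $t=0$ is bounded below by Lemma~\ref{qlin} as $W(0)\gg n/\sqrt q$, a loss of $\sqrt q$ compared with the intersective case that must later be absorbed into the exponent.

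For $t\neq 0$ I would decompose according to the reduced denominator $d'=d_k/(t,d_k)$. On the \emph{minor arcs} $U^L\leq d'\leq h_q(n)/U^L$, with $L=64\ell^24^\ell$ and $U$ a suitable power of $\log N$, Lemma~\ref{min} furnishes the uniform bound $|W(t)|\ll n/U+U^L n^{1-4^{-\ell}}$, and summing using $\sum_t\prod_i\widehat{f_i}(t)\leq M^2 d_k$ gives a contribution analogous to (\ref{A2}) but with exponent $4^{-\ell}$ in place of $2^{-\ell}$. On the \emph{major arcs} $d'<U^L$, coprimality of the $d_i$ supplies for each such $t$ some $j$ with $d'\nmid d_j$, yielding the geometric-series bound $|\widehat{f_j}(t)|\leq(d')^2/|A_j|\leq U^{2L}/|A_j|$; combining this pointwise decay with the high-moment estimate of Lemma~\ref{moment} via a H\"older inequality (taking $s>2^\ell$) bounds the major arc contribution by something tractable. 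The main obstacle will be exactly this major arc regime: Lemma~\ref{min} gives no information for very small denominators, forcing reliance on the comparatively weaker Lemma~\ref{moment}, while the $\sqrt q$ deficit from Linnik's theorem must be simultaneously absorbed. Optimally balancing $U$ and the H\"older exponents against these two effects is precisely what produces the base $200\ell^2 4^\ell$ in the final doubly-exponential exponent.
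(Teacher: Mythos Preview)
Your proposal is correct and follows essentially the same approach as the paper: the paper deduces Theorem~\ref{M2} from Lemma~\ref{itn2} (your width lemma) by the identical iteration used for Theorem~\ref{M1}, with Linnik's theorem handling the one-dimensional base case, and proves Lemma~\ref{itn2} via exactly the Fourier setup, minor-arc bound from Lemma~\ref{min}, and major-arc H\"older/Lemma~\ref{moment} argument you describe. The only omission is that your width lemma needs the extra hypothesis $\delta \geq CN^{-c/\ell}$ (so that $q\leq\delta^{-1}$ is small enough for Lemma~\ref{qlin} to apply to the main term), which the paper states explicitly; in the iteration this is harmless since if it ever fails one is already done.
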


\noindent Virtually identical to the deduction of Theorem \ref{M1} from Lemma \ref{itn}, Theorem \ref{M2} follows from Linnik's theorem and the following lemma.

\begin{lemma}\label{itn2} Suppose $A=\{x_1d_1 + \cdots + x_kd_k : \ |x_i|\leq L_i \} \subseteq [-N,N]$ is an $M$-proper, symmetric GAP with $L_1\cdots L_k=\delta N$, and suppose $h\in \Z[x]$ is a polynomial of degree $\ell$ with a root modulo $(d_1,\dots, d_k)$ that is coprime to $(d_1,\dots, d_k)$.  If $\delta \geq CN^{-c/\ell}$ and
\begin{equation}\label{minL2} \min\{L_i\}_{i=1}^k \geq \Big(CM(4k)^k\delta^{-1}\log N\Big)^{200\ell^2 4^{\ell}-1} \end{equation} 
for sufficiently large and small constants $C$ and $c$, respectively, then $0\neq h(p) \in A$ for some $p\in \P$.
\end{lemma}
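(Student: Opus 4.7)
The plan is to adapt the Fourier-analytic argument from Lemma \ref{itn} to the prime setting, substituting the elementary Weyl estimate (Lemma \ref{weylI}) with the prime Weyl-type bound (Lemma \ref{min}) and the high-moment estimate (Lemma \ref{moment}), while invoking the quantitative Linnik's theorem (Lemma \ref{qlin}) to obtain the requisite lower bound on the main term. Write $q=(d_1,\dots,d_k)$ and let $r$ be a coprime root of $h$ modulo $q$; set $h_q(x)=h(r+qx)/q\in\Z[x]$ and $d_i'=d_i/q$. It suffices to exhibit $m\in\N$ with $qm+r\in\P$ and $0\neq h_q(m)\in A'=\{x_1d_1'+\cdots+x_kd_k':|x_i|\leq L_i\}$.

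For $1\leq i\leq k-1$, define $A_i=\{xd_i':|x|\leq L_i/4k\}\subseteq\Z_{d_k'}$ and $f_i=|A_i|^{-1}1_{A_i}*1_{A_i}$. Exactly as in the intersective case, $\widehat{f_i}\geq 0$, $\widehat{f_i}(0)=|A_i|$, and $M$-properness yields $\sum_t\widehat{f_1}(t)\cdots\widehat{f_{k-1}}(t)\leq M^2d_k'$. Choose $n=(L_kd_k'/2q^{\ell-1}b)^{1/\ell}$ with $b$ the leading coefficient of $h$, so that $0<h_q(m)<L_kd_k'/2$ for all but $O_h(1)$ values of $m\in[1,n]$ and $h_q(m)\in A'$ whenever $f_1*\cdots*f_{k-1}(h_q(m))>0$. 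Fourier expansion via (\ref{orth}) gives
\[
d_k'\sum_{\substack{m\leq n\\ qm+r\in\P}}\log(qm+r)\,f_1*\cdots*f_{k-1}(h_q(m))=\sum_{t\in\Z_{d_k'}}\widehat{f_1}(t)\cdots\widehat{f_{k-1}}(t)\,W(t),
\]
where $W(t)=\sum_{m\leq n,\, qm+r\in\P}\log(qm+r)e^{2\pi i h_q(m)t/d_k'}$. The hypothesis $\delta\geq CN^{-c/\ell}$ ensures that $q\leq\delta^{-1}$ is at most a small power of $qn$, so Lemma \ref{qlin} applied to the progression $r\bmod q$ yields $W(0)\gg qn/(\phi(q)\sqrt{q})$, the main term that must be shown to dominate.

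The error sum over $t\neq 0$ will be split according to $d'=d_k'/(t,d_k')$. For large $d'$ (minor arcs), Lemma \ref{min} applied with a parameter $U$ chosen as a small power of $CM(4k)^k\delta^{-1}\log N$ gives the pointwise bound $|W(t)|\ll n/U+U^{L}n^{1-4^{-\ell}}$, and the total mass estimate $\sum_t\widehat{f_1}(t)\cdots\widehat{f_{k-1}}(t)\leq M^2d_k'$ controls this range. For intermediate $d'$, the same device as in the intersective proof applies: since $(d_1',\dots,d_k')=1$, for each relevant $t$ one has $d'\nmid d_j'$ for some $j$, whence $\widehat{f_j}(t)\ll (d')^2/|A_j|$ by the geometric-series bound; summing over multiples of each candidate value of $(t,d_k')$ and combining with the trivial bound $|W(t)|\ll n\log N$ gives an acceptable contribution. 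Balancing these estimates against $W(0)$, with appropriate choices of $U$ and the break point $X$ between the intermediate and major regimes, dictates the exponent $200\ell^24^\ell-1$ appearing in (\ref{minL2}).

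The principal obstacle is the regime of very small reduced denominators $d'$, where Lemma \ref{min} provides no savings and $W(t)$ can be as large as $n\log N$. I will handle this by H\"older's inequality with an exponent $s>2^\ell$, using Lemma \ref{moment} to bound $\sum_t|W(t)|^s\ll d_k'n^{s-\ell}+n^s$ and absorbing the loss through the Fourier decay of $\widehat{f_1}\cdots\widehat{f_{k-1}}$ over the same range. The interplay between $s$, the Waring--Goldbach exponent $\ell$, and the Linnik-admissible range for $q$ together enforce the numerology of the statement; iteration (as in the derivation of Theorem \ref{M1} from Lemma \ref{itn}) then promotes this lemma to Theorem \ref{M2}.
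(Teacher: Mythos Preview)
Your plan is correct and matches the paper's proof in all essential respects: the reduction to $h_q$ on $A'$, the construction of the $f_i$ and the Fourier expansion, the use of Lemma~\ref{qlin} for the main term, Lemma~\ref{min} on the minor arcs, and H\"older combined with Lemma~\ref{moment} and the Fourier decay of a single $\widehat{f_j}$ on the remaining frequencies.

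The one organizational difference is that the paper uses a two-way split rather than your three-way split. In the paper, the entire range $(t,d_k')\geq d_k'/U^L$ (your ``intermediate'' together with ``very small'') is handled in a single stroke: one applies H\"older with exponent $s=2^{\ell}+1$, bounds $\sum_t|V(t)|^s$ by Lemma~\ref{moment}, and in the dual factor uses the geometric-series decay $\widehat{f_j}(t)\leq U^{2L}/|A_j|$ (valid for \emph{every} $t$ in this range, since $(d_1',\dots,d_k')=1$ forces $d_k'/(t,d_k')\nmid d_j'$ for some $j<k$) together with the mass bound $\sum_t\widehat{f_1}\cdots\widehat{f_{k-1}}\leq M^2d_k'$. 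Your separate ``intermediate'' step, copying the device from Lemma~\ref{itn} with the trivial bound $|W(t)|\ll n\log N$, is not wrong---it would work there as well---but it is redundant once you carry out the H\"older/moment argument, which already covers that range. Dropping the intermediate layer streamlines the numerology and brings you exactly to the paper's exponent.
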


\begin{proof}Fix an $M$-proper GAP \[A=\{x_1D_1 + \cdots + x_kD_k : \ |x_i|\leq L_i \}\subseteq[-N,N],\] let $q=(D_1,\dots, D_k)$, and fix a polynomial $h\in\Z[x]$ of degree $\ell$ with a root $r\in[0,q)$ modulo $q$ such that $(r,q)=1$, which by symmetry we can assume has positive leading coefficient.  To show that $A$ contains a nonzero element of the form $h(p)$ with $p\in \P$, it suffices to show that \[A'=\{x_1d_1 + \cdots + x_kd_k : \ |x_i|\leq L_i \}\] contains a nonzero element of the form $h_q(m)=h(qm+r)/q$ with $qm+r \in \P$, where $d_i=D_i/q$. 
Let \[\Lambda=\{m\in \N : qm+r \in \P, \ 0<h_q(m)<L_kd_k/2\}\] and let $n=\Big(\dfrac{L_kd_k}{2q^{\ell-1}b}\Big)^{1/\ell}$, where $b$ is the leading coefficient of $h$. 

\noindent For $1\leq i \leq k-1$, assume without loss of generality that $L_id_i \leq L_kd_k$, let \[A_i=\{xd_i: |x|\leq L_i/4k\}\subseteq \Z,\] and define $g_i,f_i$ as functions on $\Z_{d_k}$ by $$g_i(x)=\sum_{|y|\leq L_i/4k} 1_{yd_i}(x) \quad \text{and} \quad f_i(x)=|A_i|^{-1}g_i*g_i(x).$$ As in the proof of Lemma \ref{itn}, $f_i$ is supported on $\{xd_i: |x|\leq L_i/2k\}\subseteq \Z_{d_k}$, \begin{equation} \label{Mprop2} f_1*\cdots*f_{k-1}(0)\leq M^2\end{equation} by $M$-properness, and \[0\leq \widehat{f_i}(t)\leq \widehat{f_i}(0)=|A_i|.\] 
Further, if $f_1*\cdots*f_{k-1}(h_q(m))>0$ for some $m \in \Lambda$, then $0\neq h(p)\in A$ for some $p\in \P$, so we need only show
\begin{equation*}d_k\sum_{m\in \Lambda} \log(qm+r) f_1*\cdots*f_{k-1}(h_q(m))=\sum_{t\in \Z_{d_k}}\widehat{f_1}(t)\cdots\widehat{f_{k-1}}(t)V(t)>0, 
\end{equation*}
where the equality follows from (\ref{orth}) and  $$V(t)=\sum_{m\in \Lambda}\log(qm+r) \ e^{-2\pi i h_q(m)t/d_k},$$ for which if suffices to show \begin{equation}\label{suff2} \sum_{t\neq 0}\widehat{f_1}(t)\cdots\widehat{f_{k-1}}(t)|V(t)| < \widehat{f_1}(0)\cdots\widehat{f_{k-1}}(0)V(0)=(\psi(qn+r,r,q) + O_h(\log N))\prod_{i=1}^{k-1}|A_i|. 
\end{equation} 
where the estimate on $V(0)$ follows from (\ref{symdif}), and we recall that $$\psi(x,a,q):=\sum_{\substack{p\in \P\cap[1,x] \\ p \equiv a \textnormal{ mod }q}} \log p.$$ We now apply Lemma \ref{min} with $U=C\delta^{-1}(4k)^k \log N$ for a sufficiently large constant $C$ depending on $h$, noting that if $A$ satisfies (\ref{minL2}) then $d_k > h_q(n)/U^L$, where $L=64\ell^24^{\ell}$. Therefore, by (\ref{finv}) and (\ref{alphaB}), if $(t,d_k)<d_k/U^L$ then 
\begin{equation}\label{ltX}\sum_{(t,d_k)\leq d_k/U^L} \widehat{f_1}(t)\cdots\widehat{f_{k-1}}(t)|V(t)|\ll_h (n/U+U^Ln^{1-4^{-\ell}})\prod_{i=1}^{k-1}|A_i|(4k)^k/\delta.
\end{equation}
Again we see that if $(t,d_k)>d_k/U^L$ and $d_k/(t,d_k) \nmid d_j$, then
$\widehat{f_j}(t) \leq U^{2L}/|A_j|$. Applying Lemma \ref{moment}, (\ref{finv}),  and H\"older's Inequality with $s=2^{\ell}+1$, we have
\begin{align*}\sum_{(t,d_k)\geq d_k/U^L} \widehat{f_1}(t)\cdots\widehat{f_{k-1}}(t)|V(t)| & \ll_h \Big( \sum_{t\in \Z_{d_k}}|V(t)|^s\Big)^{1/s}\Big(\sum_{(t,d_k)\geq d_k/U^L}(\widehat{f_1}(t)\cdots\widehat{f_{k-1}}(t))^{1+1/(s-1)} \Big)^{(s-1)/s} \\ & \ll_h nU^{2L/s}(\min\{|A_i|\}_{i=1}^{k-1})^{-2/s}\Big(\prod_{i=1}^{k-1}|A_i|\Big)^{1/s}(M^2d_k)^{(s-1)/s},
\end{align*}
which combined with (\ref{alphaB}) yields
\begin{equation} \label{gtX} \sum_{(t,d_k)\geq d_k/U^L} \widehat{f_1}(t)\cdots\widehat{f_{k-1}}(t)|V(t)|\ll_h nM^2U^{2L/s}(\min\{|A_i|\}_{i=1}^{k-1})^{-2/s}\prod_{i=1}^{k-1}|A_i|(4k)^k/\delta. 
\end{equation}

\noindent If $\delta \geq CN^{-c/\ell}$ for appropriately large and small constants, respectively, then in particular $n>N^{1/2\ell}$, and since $q\leq\delta^{-1}$ we know from Lemma \ref{qlin} that  
\begin{equation}\label{plb} \psi(qn+r,r,q) \gg \frac{qn}{\phi(q)\sqrt{q}} \geq \delta^{1/2}n. \end{equation}
We see that if $A$ satisfies (\ref{minL2}), then  in particular $\min\{|A_i|\}_{i=1}^{k-1} \geq U^{2L}(M(4k)^k/\delta)^s$. In this case, we see from (\ref{ltX}), (\ref{gtX}), and (\ref{plb})  that the left-hand side of (\ref{suff2}) is bounded by a small constant times the right-hand side of (\ref{suff2}), and the lemma follows. 
\end{proof}

\section{Some Remarks on Special Cases and Expected Bounds} \label{special}
Here we note that the ``wideness" conditions stipulated in Lemmas \ref{itn} and \ref{itn2} are stronger, simplified formulations of what are actually used in the respective proofs. Specifically, in the proof of Lemma \ref{itn}, we exhibited the following. 

\begin{lemma}\label{spec1}  Suppose $A=\{x_1D_1 + \cdots + x_kD_k : \ |x_i|\leq L_i \} \subseteq [-N,N]$ is an $M$-proper, symmetric GAP with $L_1\cdots L_k=\delta N$ and $L_kD_k \geq L_iD_i$ for $1\leq i \leq k$, suppose $h\in \Z[x]$ is a polynomial of degree $\ell$ which has a root modulo $q=(D_1,\dots,D_k)$, and let $d_i=D_i/q$. If $L_k \geq C\Big(M(4k)^{k}\delta^{-1}\log N\Big)^{2^{\ell+2}}$ and for every $D\geq d_k\Big(CM^{2^{\ell+1}}(4k)^{2^{\ell}k}(\log N)^{\ell^2}\delta^{-(2^{\ell}+\ell)}\Big)^{-1}$ with $D\mid d_k$, there exists $1\leq i \leq k-1$ with $D \nmid d_i$ and
\begin{equation}\label{fat3} L_i \geq C\Big(M(4k)^{k}\delta^{-1}\log N\Big)^{2^{\ell+2}} \end{equation} 
for a sufficiently large constant $C$ depending only on $h$, then $A$ contains a nonzero element of $h(\Z)$. 
\end{lemma}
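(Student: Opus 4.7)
The plan is to mirror the proof of Lemma \ref{itn}, auditing precisely where the wideness assumptions on the $L_i$ are actually used. We set up the same Fej\'er-type convolution functions $f_i, g_i$ on $\Z_{d_k}$, and the task reduces to the strict inequality (\ref{suff}). As there, we split the nonzero frequencies $t \in \Z_{d_k}$ according to whether $(t,d_k) \leq d_k/X$ or $(t,d_k) > d_k/X$, where $X = CM^{2^{\ell+1}}(4k)^{2^{\ell}k}(\log N)^{\ell^2}\delta^{-(2^{\ell}+\ell)}$.

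For the ``small common factor'' regime, Lemma \ref{weylI} applied as in (\ref{gauss2}) produces a gain of shape $(\delta^{-\ell}\log^{\ell^2}N(1/X+1/n+1/L_k))^{2^{-\ell}}$, so the estimate (\ref{A2}) on the contribution of this regime depends only on $L_k$ being large; the hypothesis $L_k \geq C(M(4k)^k\delta^{-1}\log N)^{2^{\ell+2}}$ is therefore exactly what is needed. For the ``large common factor'' regime, we group the frequencies $t$ by the value of $D = (t,d_k)$. For each surviving $D > d_k/X$ with $D\mid d_k$, the refined hypothesis supplies an index $i = i(D) \in [1,k-1]$ for which the divisibility condition required to run the geometric-series bound $\widehat{f_{i(D)}}(t) \leq X^2/|A_{i(D)}|$ fails, and for which $L_{i(D)}$ satisfies (\ref{fat3}). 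Using this $i$-dependent bound in place of the uniform one, the argument leading to (\ref{canc}) gives
\[\sum_{(t,d_k)=D}\widehat{f_1}(t)\cdots\widehat{f_{k-1}}(t) \leq X^3\Big(\prod_{j=1}^{k-1}|A_j|\Big)\big/|A_{i(D)}|^2,\]
and summing over the at most $X$ such $D$ produces the analog of (\ref{A3}), now with $\min_D|A_{i(D)}|^2$ playing the role previously played by $\min_i|A_i|^2$. Thanks to (\ref{fat3}), each $|A_{i(D)}|$ obeys the same lower bound as in the original argument, so combining the two regimes exactly as at the end of the proof of Lemma \ref{itn} completes the proof.

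The conceptual point that makes this refinement essentially free is that cancellation in the large-common-factor regime is only ever exploited one coordinate at a time: for each bad value of $(t,d_k)$, a single well-chosen $\widehat{f_i}$ suffices to absorb the loss, and which index $i$ does the job may depend on that value. The sole delicate point, not in the end a real obstacle, is verifying that this divisor-dependent bookkeeping does not cost an extra factor; since each summand is controlled by $X^3\prod_j |A_j|/|A_{i(D)}|^2$ and at most $X$ divisors are in play, the total contribution is $O\big(X^4\prod_j|A_j|/\min_D|A_{i(D)}|^2\big)$, matching the original bound.
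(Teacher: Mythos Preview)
Your proposal is correct and is exactly the approach the paper takes: the paper does not give a separate proof of Lemma~\ref{spec1} but states explicitly that it was ``exhibited'' in the proof of Lemma~\ref{itn}, and your audit of that proof---noting that (\ref{A2}) requires only $L_k$ large while (\ref{canc}) and (\ref{A3}) use, for each large divisor $D=(t,d_k)$, a single index $j$ with $d_k/D\nmid d_j$ and $L_j$ large---is precisely that extraction. One minor remark: your phrase ``the divisibility condition\dots fails'' is slightly ambiguous; to be clear, what is needed is that $d_k/D\nmid d_{i(D)}$, so that $\norm{d_{i(D)}t/d_k}\geq D/d_k>1/X$ and the geometric-series bound $\widehat{f_{i(D)}}(t)\leq X^2/|A_{i(D)}|$ applies.
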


\noindent One can also extract a sharper version of Lemma \ref{itn2}, and these more careful formulations allow one to deduce much improved bounds in  special cases in which the iteration procedure used to prove Theorems \ref{M1} and \ref{M2} is not necessary. In particular, if the GAP is equally ``wide" in each dimension, i.e. $$A=\{x_1d_1+\cdots+x_kd_k \ : \ |x_i|\leq L\} \subseteq [-N,N],$$  then one obtains bounds of the form \begin{equation}\label{imp} |A|\ll_h N^{1-c^{\ell}/k}, \end{equation} which hold for dimensions up to about $k=\sqrt{\log N}$. Of course, if the elements of the GAP are extremely concentrated in one dimension, then one immediately has excellent bounds by reducing to the one-dimensional case. The difficulty  lies in intermediate cases such as $$A=\{x_1d_1+\cdots+x_kd_k \ : \ |x_i|\leq L_i\}, \quad |A|=\delta N, \quad L_i \approx \delta^{-2^i}, $$ but we believe this to be a shortcoming of the proof rather than a genuine obstruction, and it is likely that, in both the unrestricted  and prime input settings, bounds of the form (\ref{imp}) hold in full generality.

\end{document}